\theoremstyle{plain}
\newtheorem{theorem}{Theorem}
\newtheorem{lemma}[theorem]{Lemma}
\newtheorem{proposition}[theorem]{Proposition}
\newtheorem{corollary}[theorem]{Corollary}
\newtheorem*{corcont}{Corollary~\ref{liste} --- continued}
\theoremstyle{definition}
\newtheorem{remark}[theorem]{Remark}
\newcommand{\beq}{\begin{equation}}
\newcommand{\eeq}{\end{equation}}
\DeclareMathOperator{\Sym}{Sym}
\DeclareMathOperator{\tr}{tr}
\newcommand{\PP}{{\mathbb{P}}}
\newcommand{\Z}{\mathbb{Z}}
\newcommand{\N}{\mathbb{N}}
\newcommand{\Q}{\mathbb{Q}}
\newcommand{\F}{\mathbb{F}}
\newcommand{\longto}{\longrightarrow}
\newcommand{\tens}{\otimes}
\newcommand{\cA}{\mathcal{A}}
\begin{document}

\title{Gaps between prime numbers and tensor rank of multiplication in finite fields}


\author{Hugues Randriam\footnote{supported by
ANR-14-CE25-0015 project Gardio and ANR-15-CE39-0013 project Manta}}
\date{}

\maketitle

\begin{abstract}
We present effective upper bounds
on the symmetric bilinear complexity of multiplication in extensions of
a base finite field $\F_{p^2}$ of prime square order,
obtained by combining estimates on gaps between prime numbers
together with an optimal construction of auxiliary divisors for
multiplication algorithms by evaluation-interpolation on curves.
Most of this material dates back to a 2011 unpublished work of the author,
but it still provides the best results on this topic at the present time.

Then a few updates are given in order to take recent developments into account,
including comparison with a similar work of Ballet and Zykin,
generalization to classical bilinear complexity over $\F_p$,
and to short multiplication of polynomials,
as well as a discussion of open questions on gaps between prime numbers or more
generally values of certain arithmetic functions.

\vspace{.5\baselineskip}

\noindent\textbf{Keywords:} finite fields; bilinear complexity; tensor rank; prime numbers; algebraic curves.

\noindent\textbf{MSC2010 Classification: 12Y05 (main);} 11A41; 11T71; 14Q05. 
\end{abstract}

\section{Introduction}

Let $F$ be a field and $\cA$ a finite dimensional commutative $F$-algebra.
Denote by $m_{\cA}:\cA\times\cA\longto\cA$ the multiplication map in $\cA$,
seen as a symmetric $F$-bilinear map,
and by $T_{\cA}\in\Sym^2(\cA^\vee)\tens_F\cA$ the associated tensor,
where $\cA^\vee$ is the dual space of $\cA$ over $F$
and $\Sym^2(\cA^\vee)\subset\cA^\vee\tens_F\cA^\vee$ stands for the subspace of symmetric tensors.

By a symmetric bilinear multiplication algorithm for $\cA$, of length $n$,
we mean one of the following equivalent data
(see e.g. \cite{HR-JComp} or \cite[\S5.1--5.3]{HR-AGCT}):
\begin{itemize}
\item linear maps $\alpha:\cA\longto F^n$ and $\omega:F^n\longto\cA$
such that the following diagram commutes
\nopagebreak
\beq
\begin{CD}
\cA\times\cA @>{m_{\cA}}>> \cA \\
@V{(\alpha,\alpha)}VV @AA{\omega}A \\
F^n\times F^n @>{*}>> F^n
\end{CD}
\eeq
where $*$ denotes componentwise multiplication in $F^n$
\item linear forms $\alpha_1,\dots,\alpha_n:\cA\longto F$
and elements $w_1,\dots,w_n\in\cA$ such that
the product in $\cA$ of any two $x,y\in\cA$ can be computed as
\beq
xy=\sum_{1\leq i\leq n}\alpha_i(x)\alpha_i(y)w_i
\eeq
\item a decomposition
\beq
T_\cA=\sum_{1\leq i\leq n}\alpha_i^{\tens 2}\tens w_i
\eeq
of $T_\cA$ as a sum of $n$ elementary symmetric tensors in $\Sym^2(\cA^\vee)\tens\cA$.
\end{itemize}

We define the \emph{symmetric bilinear complexity} of $\cA$ over $F$
\beq
\mu^{\operatorname{sym}}_F(\cA)
\eeq
as the smallest length $n$ for which such
a symmetric bilinear multiplication algorithm exists.
Equivalently, $\mu^{\operatorname{sym}}_F(\cA)$ is the \emph{symmetric tensor rank} of $T_\cA$.

Most of our interest will be when $F=\F_q$ is a finite field
and $\cA=\F_{q^k}$ is its (unique) degree $k$ field extension.
We then set
\beq
\mu^{\operatorname{sym}}_q(k)=\mu^{\operatorname{sym}}_{\F_q}(\F_{q^k}).
\eeq

We will focus on upper bounds for this quantity or, which is essentially
the same, on the construction of symmetric bilinear multiplication algorithms
for $\F_{q^k}$ over $\F_q$, especially when $k$ is large.
For this, a powerful method was introduced by Chudnovsky and Chudnovsky
in 1987 with \cite{ChCh87}\cite{ChCh88}, using evaluation-interpolation on
algebraic curves.

When looking at the literature, the reader should be wary that these authors,
and those who followed them, actually expressed their results in terms
of the classical (possibly asymmetric) bilinear complexity $\mu_q(k)$,
not in terms of the symmetric bilinear complexity $\mu^{\operatorname{sym}}_q(k)$.
Indeed, this last notion was first introduced in this context
only in 2012 with \cite{HR-JComp}.
However, the original construction of \cite{ChCh87}\cite{ChCh88} naturally
produces symmetric algorithms.
This allows us, in what follows, to restate the bounds derived by this method
in terms of $\mu^{\operatorname{sym}}_q(k)$, even if the original statements were
in terms of $\mu_q(k)$.

The first and probably the most spectacular achievement of this method
is the proof that this quantity asymptotically grows linearly with $k$. 
Indeed, when recast in terms of symmetric complexity, the main result
of \cite{ChCh87}\cite{ChCh88} reads as:
\beq
\label{asympChCh}
\limsup_{k\to\infty}\frac{1}{k}\mu^{\operatorname{sym}}_q(k)\leq 2\left(1+\frac{1}{\sqrt{q}-3}\right)\qquad\textrm{for $q\geq25$ a square.}
\eeq
Actually, parts of the proof given for this result were somehow sketchy,
but all the missing details were later provided by Shparlinski, Tsfasman, and
Vladut in 1991 with \cite{STV}.

At the same time they provided these missing details, these same authors also
proposed the following improved bound:
\beq
\label{fSTV}
\limsup_{k\to\infty}\frac{1}{k}\mu^{\operatorname{sym}}_q(k)\leq 2\left(1+\frac{1}{\sqrt{q}-2}\right)\qquad\textrm{for $q\geq9$ a square.}
\eeq
Unfortunately, a fatal flaw was later found in their proof,
as first observed in \cite{Cascudo}.
This error concerns the solution of what some authors now call ``Riemann-Roch
systems of equations'' \cite{CCX}, a key ingredient in the Chudnovsky-Chudnovsky
method, and it totally invalidates the proof given for \eqref{fSTV}.

Fortunately, an alternative method that allows to solve such Riemann-Roch systems was
then discovered by the author around end of 2010, and published in 2013
with \cite{21sep}.
It readily allows to repair the proof of \eqref{fSTV},
with only one small downfall: the method only applies to sligthly
larger $q$ than originally needed.

It thus became desirable to try to fine tune the method of \cite{21sep}
in order to make it work for $q$ as small as possible.
This was the main goal of \cite{2D-G},
and it allowed to repair the Shparlinski-Tsfasman-Vladut bound \eqref{fSTV}
as follows.

For any prime power $q$, define the \emph{dense Ihara constant} \cite[p.~23]{2D-G}
as the smallest real number $A'(q)$ such that there exists a sequence
of curves $X_j$ over $\F_q$, of genus $g_j\to\infty$,
with
\begin{itemize}
\item $\frac{|X_j(\F_q)|}{g_j}\to A'(q)$
\item $\frac{g_{j+1}}{g_j}\to 1$
\end{itemize}
as $j\to\infty$.
Then \cite[Cor.~18]{2D-G} we have
\beq
\label{STVA'}
\limsup_{k\to\infty}\frac{1}{k}\mu^{\operatorname{sym}}_q(k)\leq 2\left(1+\frac{1}{A'(q)-1}\right)\qquad\textrm{as soon as $\textstyle{A'(q)\geq 5-\frac{14q^2-4}{q^4+2q^2-1}}$,}
\eeq
and in particular we have
\beq
\label{rSTV}
\limsup_{k\to\infty}\frac{1}{k}\mu^{\operatorname{sym}}_q(k)\leq 2\left(1+\frac{1}{\sqrt{q}-2}\right)\qquad\textrm{for $q\geq49$ a square.}
\eeq
Actually, \eqref{rSTV} does not requires the full strength of \cite{2D-G}. 
It can readily be derived from the original, simpler results of \cite{21sep}.
Thus, while \cite{2D-G} remained unpublished, \eqref{rSTV}
can be found in published form, with full proof, in \cite[Th.~6.4]{HR-JComp}.
Observe that it entirely repairs \eqref{fSTV}, but for $q\geq49$ instead of $q\geq9$,
leaving only the cases $q=9,16,25$ uncovered.

\vspace{\baselineskip}

This far we discussed only works on \emph{asymptotic} upper bounds.
Parallel to these, some authors considered \emph{effective} upper
bounds on $\mu^{\operatorname{sym}}_q(k)$,
that should apply to any (finite, explicit) value of $k$.
This topic was treated by Ballet first in \cite{Ballet1999}
and \cite{Ballet2003}, and then improved in \cite{Ballet2008}.
However it turned out this last work contained several errors,
the most important of which being that it reproduced the
flawed proof from \cite{STV} and based all its results on it.

Thus, in \cite{2D-G}, while repairing the Shparlinski-Tsfasman-Vladut bound,
the author also explained how his method could repair Ballet's
results likewise. 
Actually, not only these results could be repaired, but they could also
be improved.
Indeed, part of Ballet's argument was based on Bertrand's postulate,
proved by Chebyshev, that asserts that for any real $x>1$, there is
a prime between $x$ and $2x$.
It was clear that improved bounds on the bilinear complexity
could be derived from finer estimates on the gaps between prime numbers
(such as \cite{BHP}).

This was presented in section~5 of \cite{2D-G}, somehow as a digression
(it is also discussed in \cite{HR-JComp}, especially Rem.~5.3, 5.5, 5.8,
but with a slightly different approach).
Unfortunately, the fact that \cite{2D-G} remained unpublished,
in French, and the
fact that this section~5 followed long and technical developments in a
quite unrelated direction, did not help disseminate the ideas
introduced there.

Ultimately, this method, which combines
\begin{enumerate}[(a)]
\item the author's optimal solution of Riemann-Roch systems for the Chudnovsky-Chudnovsky method
\item fine estimates on the gaps between prime numbers,
\end{enumerate}
and which still provides the best effective upper bounds
on $\mu^{\operatorname{sym}}_q(k)$ when $q=p^2$ is a prime square,
seemed to have been forgotten by the experts.
As an illustration, very recently Ballet and Zykin \cite{BZ} partially
rediscovered this method (ingredient (b) only, not (a)), but a preliminary
version of their work did not even mention \cite{2D-G}
--- fortunately this is corrected in the final version.

Thus, almost seven years after it was first written,
the author would like to take this opportunity to finally
publish these results in the peer-reviewed literature.
Hopefully this will provide a proper reference for future researchers.
Accordingly, the next section is a translation into English of section~5
of \cite{2D-G}, with essentially no significant change.
Then in the last section we present some updates in order to take recent
developments into account, and in particular we explain the links
with \cite{BZ}. We also discuss a few questions presented in \cite{2D-G}.
This includes compatibilty with generalizations of the Chudnovsky-Chudnovsky method \cite{HR-JComp},
leading to new effective and asymptotic bounds
on the classical bilinear complexity $\mu_q(k)$
when $q=p$ is prime,
and to similar results for short multiplication of polynomials;
as well as open questions on gaps between prime numbers or more
generally gaps in the set of values of certain arithmetic functions.

\section{Main results as of 2011}

As explained in the introduction, our aim here is to fix the proof of the main result
claimed by Ballet in \cite{Ballet2008}, and then to improve on it.
This statement concerns effective upper bounds on $\mu^{\operatorname{sym}}_{p^2}(k)$,
the symmetric bilinear multiplication complexity in extensions of a base
field $\F_{p^2}$ of prime square order.

We first recall an instance of the basic construction of Chudnovsky and Chudnovsky \cite{ChCh87}\cite{ChCh88}:
\begin{proposition}
\label{basicChCh}
Let $X$ be a curve of genus $g$ over the finite field $\F_q$,
equipped with a closed point $Q$ of degree~$k$,
and with $n$ points $P_1,\dots,P_n$ of degree~$1$.
Suppose that $X$ also admits a $\F_q$-rational divisor $D$
(w.l.o.g. of support disjoint from $Q$ and the $P_i$)
such that 
\begin{itemize}
\item $D-Q$ is nonspecial\\ (so the evaluation map $L(D)\longto \F_q(Q)=\F_{q^k}$ is surjective)
\item $2D-(P_1+\cdots+P_n)$ has no section\\ (so the evaluation map $L(2D)\longto\bigoplus_{i=1}^n\F_q(P_i)=\F_q^n$ is injective). 
\end{itemize}
Then there is a symmetric bilinear multiplication algorithm of length $n$ for $\F_{q^k}$ over $\F_q$, i.e.
\beq
\mu^{\operatorname{sym}}_q(k)\leq n.
\eeq
\end{proposition}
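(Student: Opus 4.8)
The plan is to read the data of a length-$n$ symmetric algorithm directly off the two evaluation maps: evaluate at the $P_i$, multiply componentwise, and interpolate back to recover the value at $Q$. To set this up, first fix $\F_q$-linear sections. From the restriction exact sequence $0\to\cO(D-Q)\to\cO(D)\to\cO(D)|_Q\to 0$ (legitimate since $Q\notin\Supp D$), the identification $\cO(D)|_Q\cong\F_q(Q)\cong\F_{q^k}$, and the vanishing $H^1(\cO(D-Q))=0$ expressing nonspeciality of $D-Q$, the evaluation map $\ev_Q\colon L(D)\to\F_{q^k}$ is onto; choose a linear section $s\colon\F_{q^k}\to L(D)$ with $\ev_Q\circ s=\mathrm{id}$. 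Similarly, from $0\to\cO(2D-\sum_iP_i)\to\cO(2D)\to\bigoplus_i\cO(2D)|_{P_i}\to 0$ and the hypothesis that $2D-(P_1+\cdots+P_n)$ has no section, the joint evaluation $\ev\colon L(2D)\to\bigoplus_i\F_q(P_i)=\F_q^n$, $h\mapsto(h(P_1),\dots,h(P_n))$, is injective; pick a linear retraction $r\colon\F_q^n\to L(2D)$ with $r\circ\ev=\mathrm{id}$ on $L(2D)$.

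Now perform the evaluation--interpolation. For $x\in\F_{q^k}$ put $f_x=s(x)\in L(D)$, a function regular at $Q$ whose value there, read through $\F_q(Q)\cong\F_{q^k}$, is $x$. For $x,y\in\F_{q^k}$, from $\div f_x,\div f_y\geq-D$ we get $f_xf_y\in L(2D)$; and since residue-field evaluation at a point is a ring homomorphism while $Q,P_1,\dots,P_n$ all avoid $\Supp D=\Supp 2D$, we have $(f_xf_y)(Q)=xy$ in $\F_{q^k}$ and $(f_xf_y)(P_i)=f_x(P_i)\,f_y(P_i)$ for each $i$. Applying $r\circ\ev=\mathrm{id}$ to $f_xf_y\in L(2D)$ and then $\ev_Q$ gives
\[
xy=\ev_Q(f_xf_y)=\ev_Q\bigl(r(\ev(f_xf_y))\bigr)=\sum_{i=1}^n f_x(P_i)\,f_y(P_i)\,w_i,\qquad w_i:=\ev_Q\bigl(r(e_i)\bigr)\in\F_{q^k},
\]
where $e_1,\dots,e_n$ is the standard basis of $\F_q^n$. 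With $\alpha_i(x):=f_x(P_i)=(\ev_{P_i}\circ s)(x)$, an $\F_q$-linear form on $\F_{q^k}$, this is $xy=\sum_{i=1}^n\alpha_i(x)\alpha_i(y)w_i$. Since the same $\alpha_i$ appears in both arguments, this is a symmetric bilinear multiplication algorithm of length $n$ (equivalently, $\alpha=(\alpha_1,\dots,\alpha_n)$ and $\omega=\ev_Q\circ r$ make the diagram of the statement commute), so $\mu^{\operatorname{sym}}_q(k)\leq n$.

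The proof contains no serious obstacle: the genuinely substantive step --- turning the cohomological hypotheses into surjectivity of $\ev_Q$ and injectivity of $\ev$ --- is already recorded in the statement, and what remains is formal linear algebra. The only points deserving care are bookkeeping: that $f_xf_y$ really lands in $L(2D)$; that multiplicativity of evaluation $(fg)(P)=f(P)g(P)$ is available at $Q$ and the $P_i$, which is exactly why one insists that $\Supp D$ be disjoint from these points, so that every function in play is regular there; and that the value of $f_xf_y$ at the degree-$k$ point $Q$, transported through $\F_q(Q)\cong\F_{q^k}$, is the field product $xy$ itself and not merely some element with the same image.
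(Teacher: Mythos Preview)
Your proof is correct and is precisely the standard Chudnovsky--Chudnovsky evaluation--interpolation argument that the paper is recalling; the paper itself offers no proof of this proposition but simply cites \cite{ChCh87}\cite{ChCh88}, so your write-up faithfully supplies the omitted details. One small notational point worth making explicit: when you apply $\ev_Q$ to $r(e_i)\in L(2D)$ you are tacitly using evaluation at $Q$ on $L(2D)$ rather than on $L(D)$, which is fine since $Q\notin\Supp(2D)$, and you do acknowledge this in your closing remarks.
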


Observe that $D-Q$ nonspecial implies $\deg(D)-k\geq g-1$,
and $2D-(P_1+\cdots+P_n)$ without section implies $2\deg(D)-n\leq g-1$.
So combining both, we see a necessary condition for the existence of such $D$, $Q$, and $P_i$
is that $X$ admits at least
\beq
\label{optimal}
|X(\F_q)|\geq n\geq2k+g-1
\eeq
points of degree~$1$.

We say a method for finding such data on $X$ is \emph{optimal} if it can work
with equality attained in \eqref{optimal}.

In the course of the proof of their main result in \cite{ChCh87}\cite{ChCh88},
Chudnovsky and Chudnovsky constructed such $D$, $Q$, and $P_i$,
but only under the suboptimal condition
\beq
|X(\F_q)|\geq2k+2g-1.
\eeq
This was also stated more explicitely by Ballet as \cite[Lemma~2.2]{Ballet1999}.
Roughly speaking, the construction proceeds by first fixing $Q$ and $D$, and then finding the $P_i$.

By contrast, in order to reach optimality, it is more natural to first fix $Q$
and $G=P_1+\dots+P_n$, and then only look for $D$
such that $D-Q$ is nonspecial and $2D-G$ has no section.
Seen this way, the problem essentially reduces to a ``Riemann-Roch system of equations''
in the sense of \cite{CCX}.

In \cite[pp.~161--162]{STV} a solution to this Riemann-Roch system is proposed under the optimal
condition \eqref{optimal}. Unfortunately, an error in the proof was detected by Cascudo
in \cite{Cascudo}, which invalidates the argument.
It turns out the very same result was later stated also
by Ballet as \cite[Prop.~2.1]{Ballet2008},
with the same proof and the error it contains reproduced as well.
Let us briefly explain this error: assuming $n=2k+g-1$,
the core of the argument is to show that the number of divisor classes $[D]$
such that $2D-G$ has sections is not more than the number of effective divisors of degree $g-1$;
for this, to each such class, one assigns an effective divisor $E\sim 2D-G$,
and one concludes with the claim that this map $[D]\mapsto E$ is injective;
unfortunately this last claim is false in general:
indeed, if the class group has some $2$-torsion,
it could happen that two divisors $D$ and $D'$ are not linearly equivalent,
but $2D-G$ and $2D'-G$ are, and give the same $E$.

\vspace{\baselineskip}

Fortunately, in \cite{21sep} the author introduced a new construction that provides an optimal
solution to certain Riemann-Roch systems.
In \cite{2D-G} it was applied to the system associated with the Chudnovsky-Chudnovsky method,
which allows to substitute Ballet's erroneous \cite[Prop.~2.1]{Ballet2008}
with the following \cite[Cor.~20]{2D-G}:
\begin{proposition}
\label{constrD-Qet2D-G}
Let $X$ be a curve of genus $g$ over
a finite field $\F_q$, equipped with two $\F_q$-rational
divisors $Q$ and $G$. Set $k=\deg Q$ and $n=\deg G$, and assume
\beq
\label{5g}
|X(\F_q)|>5g
\eeq
and
\beq
n\geq2k+g-1.
\eeq
Then there exists a $\F_q$-rational divisor $D$ on $X$,
with support in $X(\F_q)$, such that 
$D-Q$ is nonspecial of degree $g-1$,
and $2D-G$ has no section.

In particular, if $n=2k+g-1$, then
both $D-Q$ and $2D-G$ are nonspecial of degree $g-1$.
\end{proposition}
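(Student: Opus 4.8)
The plan is to realize $D$ as the solution of a ``Riemann-Roch system of equations'' in the sense of \cite{CCX}, and to solve this system by the method of \cite{21sep}. Writing $\ell(\cdot)=\dim_{\F_q}L(\cdot)$, we seek an $\F_q$-rational divisor $D$ of degree $d:=k+g-1$, with support in $X(\F_q)$, satisfying the two equations $\ell(D-Q)=0$ and $\ell(2D-G)=0$. The relevant bookkeeping is elementary: $D-Q$ has degree $g-1$, and for any divisor $B$ of degree $g-1$ Riemann-Roch gives $\ell(B)=\ell(K-B)$, so ``$\ell(D-Q)=0$'' is equivalent to ``$D-Q$ nonspecial of degree $g-1$''; while $2D-G$ has degree $2d-n=2k+2g-2-n$, which is $\leq g-1$ because $n\geq 2k+g-1$, with equality exactly when $n=2k+g-1$, so ``$\ell(2D-G)=0$'' says ``$2D-G$ has no section'' and, in the borderline case $n=2k+g-1$, the same Riemann-Roch remark promotes this to ``$2D-G$ nonspecial of degree $g-1$''. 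In particular the final sentence of the proposition requires no separate argument once the system is solved.

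Next I would pass to divisor classes. Both equations depend only on $[D]\in\operatorname{Pic}^d(X)$ and on $[Q],[G]$: if $\Theta_j\subset\operatorname{Pic}^j(X)$ denotes the image of $\Sym^j X$, i.e.\ the set of effective classes of degree $j$, then $\ell(D-Q)=0$ means $[D]\notin[Q]+\Theta_{g-1}$, while $\ell(2D-G)=0$ means $2[D]\notin[G]+\Theta_m$ with $m=2d-n\leq g-1$. So it is enough to produce a class $c\in\operatorname{Pic}^d(X)$ avoiding the bad locus
\beq
\mathcal B=\bigl([Q]+\Theta_{g-1}\bigr)\ \cup\ \{\,c:2c\in[G]+\Theta_m\,\},
\eeq
and then to represent $c$ by a divisor supported on $X(\F_q)$ --- which is automatic if $c$ is built out of points of $X(\F_q)$ to begin with. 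The delicate point is that a bare count does not finish the job: the second piece of $\mathcal B$ has cardinality bounded only by $|\operatorname{Pic}^0(X)[2]|\cdot|\Theta_m|$, and the $2$-torsion factor, which can be as large as $\sim 2^{2g}$, may swamp $|\operatorname{Pic}^d(X)|$. This is exactly the gap in the Shparlinski--Tsfasman--Vladut argument recalled above: assigning to a bad class $[D]$ an effective divisor $E\sim 2D-G$ is \emph{not} injective, since two classes differing by $2$-torsion yield the same $E$.

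The heart of the matter --- and the step I expect to be genuinely hard --- is therefore to dispose of the second equation in spite of this $2$-torsion phenomenon; this is exactly what the construction of \cite{21sep} accomplishes, and it is what I would invoke, applied to the present system. Instead of enumerating classes blindly, that construction solves Riemann-Roch systems of this shape optimally, in the sense of the discussion following \eqref{optimal} (i.e.\ with equality attained there), by building the divisor $D$ directly from points of $X(\F_q)$ while controlling at each stage the effect on both $D-Q$ and $2D-G$; this simultaneously forces the support of $D$ into $X(\F_q)$ and the degree of $D$ to be $d$, and it is in this bookkeeping that the hypothesis $|X(\F_q)|>5g$ intervenes and the constant $5$ gets determined. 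Assembling the class found in this way with the degree count of the first paragraph then yields the proposition, including its last sentence.
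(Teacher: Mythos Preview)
Your proposal is correct and follows the same path as the paper: the proposition is not proved afresh here but is imported from \cite[Cor.~20]{2D-G}, which in turn rests on the construction of \cite{21sep}, and your write-up reproduces exactly this reduction (reformulate as a Riemann-Roch system in divisor classes, identify the $2$-torsion obstruction that kills the naive count, and invoke \cite{21sep} under the hypothesis $|X(\F_q)|>5g$). Your exposition of the setup and of why the Shparlinski--Tsfasman--Vladut argument fails is more detailed than what the paper gives, but the substance and the key black-box citation are the same.
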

The only downfall is the new condition \eqref{5g}, but this does not cause any trouble
unless $q$ is very small.
In particular it does not hinder optimality,
so it will be sufficient for us in order to fix Ballet's result.
Actually, this condition \eqref{5g} could be slightly relaxed,
using the machinery introduced in sections~1--2 of \cite{2D-G}
precisely for this.
But stated this way, Proposition~\ref{constrD-Qet2D-G} is a simplified version
that does not use the full strength of \cite{2D-G}, and could be derived
directly from the original results of \cite{21sep}.

Then, Ballet's \cite[Th.~2.1(1)]{Ballet2008} is replaced with 
the following \cite[Lemma~21]{2D-G}:
\begin{lemma}
\label{mu2k+g-1}
Let $X$ be a curve of genus $g$ over a finite field $\F_q$
with
\beq
|X(\F_q)|>5g.
\eeq
Then for all integers $k$ in the interval
\beq
\label{encadrek}
\left\lceil2\log_q\frac{2g+1}{\sqrt{q}-1}\right\rceil\:<\: k\;\leq\:\frac{|X(\F_q)|+1-g}{2}
\eeq
we have
\beq
\mu^{\operatorname{sym}}_q(k)\leq 2k+g-1.
\eeq
\end{lemma}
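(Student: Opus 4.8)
The strategy is to assemble the three quoted ingredients. Fix an integer $k$ in the range \eqref{encadrek} and put $n=2k+g-1$. It suffices to produce on $X$ a closed point $Q$ of degree $k$, a family of $n$ distinct rational points $P_1,\dots,P_n$, and an $\F_q$-rational divisor $D$ with $D-Q$ nonspecial and $2D-(P_1+\cdots+P_n)$ without section: Proposition~\ref{basicChCh} then hands us a symmetric bilinear multiplication algorithm of length $n$ for $\F_{q^k}$ over $\F_q$, i.e. $\mu^{\operatorname{sym}}_q(k)\leq 2k+g-1$. The two endpoints of \eqref{encadrek} are tailored exactly to the existence of the first two items, and $D$ will then be supplied by Proposition~\ref{constrD-Qet2D-G}.

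For the rational points: the upper bound $k\leq\frac{|X(\F_q)|+1-g}{2}$ is equivalent to $n=2k+g-1\leq|X(\F_q)|$, so $X$ has at least $n$ points of degree $1$; pick $n$ of them as $P_1,\dots,P_n$ and set $G=P_1+\cdots+P_n$, an $\F_q$-rational divisor of degree $n$. (For $k\geq 2$ the point $Q$ is automatically distinct from every $P_i$; the case $k=1$ is essentially trivial and the inequality $|X(\F_q)|>5g$ leaves enough room to also keep $Q$ off the $P_i$.) For the point $Q$: for integer $k$ the left endpoint condition $\lceil 2\log_q\frac{2g+1}{\sqrt q-1}\rceil<k$ rewrites as $2g+1\leq q^{(k-1)/2}(\sqrt q-1)$, which is precisely the classical sufficient condition for a curve of genus $g$ over $\F_q$ to carry a closed point of degree $k$; it follows from the Weil bounds by Möbius inversion on $|X(\F_{q^d})|=\sum_{e\mid d}e\,B_e$ (with $B_e$ the number of closed points of degree $e$), comparing $|X(\F_{q^k})|\geq q^k+1-2gq^{k/2}$ with a bound on the number of $\F_{q^k}$-points sitting on closed points of degree a proper divisor of $k$. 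Hence such a $Q$ exists.

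It remains to produce $D$, which is now immediate: apply Proposition~\ref{constrD-Qet2D-G} to the divisors $Q$ (of degree $k$) and $G$ (of degree $n=2k+g-1$); its hypotheses $|X(\F_q)|>5g$ and $n\geq 2k+g-1$ hold (the latter with equality), so there is an $\F_q$-rational $D$, supported on $X(\F_q)$, with $D-Q$ nonspecial of degree $g-1$ and $2D-G$ without section. Feeding $X,Q,P_1,\dots,P_n$ and $D$ into Proposition~\ref{basicChCh} --- the disjointness of $\Supp D$ from $Q$ and the $P_i$ being the ``w.l.o.g.'' built into that statement, since both conditions on $D$ depend only on its linear equivalence class --- yields $\mu^{\operatorname{sym}}_q(k)\leq n=2k+g-1$, as wanted.

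Given Propositions~\ref{basicChCh} and~\ref{constrD-Qet2D-G}, the argument is essentially bookkeeping, so there is no serious obstacle; the one point to get exactly right is the middle step, namely that the ceiling-inequality on the left of \eqref{encadrek} is literally the classical condition ensuring a place of degree $k$. If one does not wish to cite that point-counting lemma, reproving it from the Weil bounds is the only genuine computation in the proof.
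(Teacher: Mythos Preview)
Your proof is correct and follows exactly the paper's approach: combine Proposition~\ref{basicChCh} with Proposition~\ref{constrD-Qet2D-G}, the left endpoint of \eqref{encadrek} being the hypothesis of the classical place-existence result (the paper cites this as \cite[Cor.~V.2.10(c)]{Stichtenoth}) and the right endpoint giving enough degree-$1$ points. You have simply unpacked in detail what the paper states in one sentence.
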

\begin{proof}
Following \cite{2D-G}, this is a direct consequence of Proposition~\ref{basicChCh},
together with \cite[Cor.~V.2.10.c]{Stichtenoth} and Proposition~\ref{constrD-Qet2D-G} above.

Alternatively, in order to refer to a published source, we observe it is
also a special case
of \cite[Th.~5.2(c)]{HR-JComp} applied with $m=k$, $l=1$, $n_{1,1}=2k+g-1$,
and $n_{d,u}=0$ for other values of $d,u$.
\end{proof}

For instance, taking $X=\PP^1$, we find:
\beq
\mu^{\operatorname{sym}}_q(k)\leq2k-1\qquad\textrm{for $k\leq\frac{q}{2}+1$,}
\eeq
an inequality that is in fact easily seen to be an equality \cite{Winograd}.

Likewise, choosing for $X$ a suitable elliptic curve, yields the following bound from \cite{Shokro}:
\beq
\label{inegShokro}
\mu^{\operatorname{sym}}_q(k)\leq 2k\qquad\textrm{for $k<\frac{q+e(q)+1}{2}$}
\eeq
with $e(q)\lesssim 2\sqrt{q}$, and in particular $e(q)=2\sqrt{q}$
if $q$ is a square.

One could continue in the same way with curves of genus $2$, $3$, etc.

Another equivalent point of view is the following.
For any integer $k$, let
$\mathcal{X}_{q,k}$ be the set of curves (up to isomorphism)
$X$ over $\F_q$, of genus $g=g(X)$, satisfying:
\begin{enumerate}[(a)]
\item $g\leq\frac{1}{2}(q^{(k-1)/2}(q^{1/2}-1)-1)$
\item $|X(\F_q)|>5g$
\item $|X(\F_q)|\geq 2k+g-1$.
\end{enumerate}
Then:
\begin{lemma}
\label{muXqk}
For any finite field $\F_q$,
and for any integer $k$ such that $\mathcal{X}_{q,k}$ is nonempty,
we have
\beq
\frac{1}{k}\mu^{\operatorname{sym}}_q(k)\leq 2+\frac{\min_{X\in\mathcal{X}_{q,k}}g(X)-1}{k}.
\eeq
\end{lemma}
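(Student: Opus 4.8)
The plan is to apply Lemma~\ref{mu2k+g-1} to a curve of minimal genus in $\mathcal{X}_{q,k}$, after observing that the three conditions (a), (b), (c) defining $\mathcal{X}_{q,k}$ are precisely engineered to match the hypotheses of that lemma. So first I would fix an integer $k$ with $\mathcal{X}_{q,k}\neq\emptyset$ and pick $X\in\mathcal{X}_{q,k}$ attaining $g:=g(X)=\min_{X'\in\mathcal{X}_{q,k}}g(X')$. Condition (b) says $|X(\F_q)|>5g$, which is exactly the standing hypothesis of Lemma~\ref{mu2k+g-1}; it then remains only to check that this particular $k$ lies in the interval \eqref{encadrek} associated to $X$.

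For the right endpoint, $k\leq\frac{|X(\F_q)|+1-g}{2}$ is equivalent, after clearing denominators, to $|X(\F_q)|\geq 2k+g-1$, which is condition (c). For the left endpoint I would simply unwind condition (a): $g\leq\frac12\bigl(q^{(k-1)/2}(q^{1/2}-1)-1\bigr)$ is equivalent to $2g+1\leq q^{(k-1)/2}(\sqrt{q}-1)$, hence (dividing by the positive quantity $\sqrt{q}-1$ and taking $\log_q$) to $2\log_q\frac{2g+1}{\sqrt{q}-1}\leq k-1$. Since $k-1$ is an integer, this last inequality forces $\lceil 2\log_q\frac{2g+1}{\sqrt{q}-1}\rceil\leq k-1<k$, which is exactly the strict lower bound required in \eqref{encadrek}.

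Thus Lemma~\ref{mu2k+g-1} applies and yields $\mu^{\operatorname{sym}}_q(k)\leq 2k+g-1$; dividing by $k$ gives $\frac1k\mu^{\operatorname{sym}}_q(k)\leq 2+\frac{g-1}{k}=2+\frac{\min_{X\in\mathcal{X}_{q,k}}g(X)-1}{k}$, as claimed. There is essentially no real obstacle here beyond this bookkeeping; the only point demanding a little care is the passage through the ceiling function in handling condition (a), where one must use that $k-1$ is an integer to upgrade $2\log_q\frac{2g+1}{\sqrt{q}-1}\leq k-1$ to the \emph{strict} inequality $\lceil\,\cdot\,\rceil<k$ rather than merely $\lceil\,\cdot\,\rceil\leq k$.
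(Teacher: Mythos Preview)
Your proof is correct and follows exactly the approach the paper intends: the paper's own proof is the single line ``It is a reformulation of the previous lemma,'' and you have carefully spelled out why conditions (a), (b), (c) defining $\mathcal{X}_{q,k}$ are precisely the hypotheses of Lemma~\ref{mu2k+g-1}. Your handling of the ceiling in condition~(a) is the only nontrivial bookkeeping step, and it is done correctly.
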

\begin{proof}
It is a reformulation of the previous lemma.
\end{proof}

Compared to similar results in the literature,
our equivalent Lemma~\ref{mu2k+g-1} and Lemma~\ref{muXqk} impose less restriction
between $k$, $g$, and the number of points on the curve.
For instance, \cite[Th.~1.1 and Cor.~2.1]{Ballet1999} reach the same conclusion,
but only for $k\leq\frac{|X(\F_q)|+1-2g}{2}$ instead of the second inequality in \eqref{encadrek},
or equivalently, under the stronger condition $|X(\F_q)|\geq 2k+2g-1$ instead of (c)
in the definition of $\mathcal{X}_{q,k}$.
However, our method requires curves with ``sufficiently many'' points, as expressed by condition (b).


\vspace{\baselineskip}

Now we can go on with the same arguments as in \cite{Ballet2008},
and then improve on the result that is stated there.

Consider the Dedekind psi function, defined for any integer $N$ by
\beq
\psi(N)=N\prod_{\substack{l|N\\ \textrm{$l$ prime}}}\left(1+\frac{1}{l}\right).
\eeq

\begin{lemma}
\label{courbesmodulaires}
Let $p$ be a prime number, and $N$ an integer prime to $p$.
Then the modular curve
$X_0(N)$ is smooth over $\F_p$, of genus
\beq
\label{g0N<psi}
g_0(N)\leq\frac{\psi(N)}{12},
\eeq
and it admits
\beq
\label{|X0N|>psi}
|X_0(N)(\F_{p^2})|\geq(p-1)\frac{\psi(N)}{12}
\eeq
points over $\F_{p^2}$.
\end{lemma}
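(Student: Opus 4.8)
The statement is really three separate classical facts about the modular curve $X_0(N)$, and I would prove them in the order: smoothness/good reduction, genus bound, point count. For the first part, I would invoke the theory of modular curves: $X_0(N)$ has a model over $\Z$ (the coarse moduli space of generalized elliptic curves with a cyclic $N$-isogeny, as in Deligne--Rapoport or Katz--Mazur), and this model is smooth over $\Z[1/N]$. Hence for any prime $p\nmid N$ the reduction $X_0(N)\otimes\F_p$ is a smooth (projective, geometrically connected) curve. This is the kind of structural input I would simply cite rather than reprove.

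\textbf{Genus bound.} For \eqref{g0N<psi} I would start from the standard genus formula for $X_0(N)$,
\[
g_0(N)=1+\frac{\mu}{12}-\frac{\nu_2}{4}-\frac{\nu_3}{3}-\frac{\nu_\infty}{2},
\]
where $\mu=[\mathrm{SL}_2(\Z):\Gamma_0(N)]=\psi(N)$ is the index, $\nu_2,\nu_3$ count elliptic points of order $2,3$, and $\nu_\infty$ counts cusps. Since $\nu_2,\nu_3,\nu_\infty\geq 0$, this immediately gives $g_0(N)\leq 1+\psi(N)/12$. To get the slightly sharper $g_0(N)\leq\psi(N)/12$ one has to absorb the $+1$: this follows because $\nu_\infty\geq 2$ whenever $N>1$ (the cusps $0$ and $\infty$ are always distinct), so $-\nu_\infty/2\leq -1$, and the remaining $-\nu_2/4-\nu_3/3\leq 0$, yielding $g_0(N)\leq \psi(N)/12$; for $N=1$ one checks $\psi(1)=1$, $g_0(1)=0$ directly. (One should state this carefully for small $N$ where $X_0(N)$ may be rational.)

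\textbf{Point count.} For \eqref{|X0N|>psi}, the point is that $X_0(N)$ over $\F_{p^2}$ has many rational points because it admits supersingular points, all of which are $\F_{p^2}$-rational. The standard argument: the number of supersingular $j$-invariants in $\overline{\F}_p$ is $\approx p/12$ (Eichler mass formula: $\sum 1/|\mathrm{Aut}|=(p-1)/24$, giving at least $\lfloor (p-1)/12\rfloor$ or so supersingular points), and each supersingular elliptic curve over $\overline{\F}_p$ is defined over $\F_{p^2}$ with all $N$-isogenies (for $N$ prime to $p$) likewise defined over $\F_{p^2}$; the supersingular locus of $X_0(N)$ thus consists of $\F_{p^2}$-points, and its size is the degree of the forgetful map $X_0(N)\to X(1)$, namely $\psi(N)=[\Gamma(1):\Gamma_0(N)]$, times the number of supersingular points on $X(1)$ (with suitable care for the finitely many $j=0,1728$ and ramification of the covering, which only helps the inequality or can be absorbed). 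Combining, $|X_0(N)(\F_{p^2})|\geq (\text{number of ss points on }X(1))\cdot\psi(N)\geq (p-1)\psi(N)/12$ after checking the mass-formula bound is at least $(p-1)/12$ in the relevant range.

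\textbf{Main obstacle.} The delicate part is not any single big theorem but the bookkeeping of small/exceptional cases: the elliptic points $j=0,1728$ (where automorphisms make the supersingular contribution and the degree of $X_0(N)\to X(1)$ behave irregularly), the precise form of the Eichler mass formula and extracting a clean $\geq (p-1)/12$ lower bound on the count of supersingular points, and verifying \eqref{g0N<psi} when $X_0(N)$ has genus $0$. I expect the cleanest route is to cite Igusa for good reduction, cite the genus formula (e.g. Shimura or Diamond--Shurman) and handle the $+1$ absorption by the cusp count, and cite the structure of the supersingular locus of $X_0(N)$ (e.g. Deligne--Rapoport, or Ogg's work on $X_0(N)$ in char $p$) to get the point count; the genuinely new content of the lemma is nil, so the "proof" is essentially an assembly of references with the small-$N$ and exceptional-$j$ cases checked by hand.
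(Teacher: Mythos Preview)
The paper does not give a proof here: it simply cites \cite{TV}, \S4.1. Your outline --- good reduction of $X_0(N)$ at $p\nmid N$ via Igusa/Deligne--Rapoport, the genus inequality from the Hurwitz formula $g_0(N)=1+\psi(N)/12-\nu_\infty/2-\nu_3/3-\nu_2/4$ together with $\nu_\infty\geq 2$ for $N>1$, and the lower bound on $|X_0(N)(\F_{p^2})|$ from the $\F_{p^2}$-rationality of the supersingular locus --- is exactly the content of that reference (and is also what the paper's subsequent Remark makes explicit), so the approaches coincide.

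One comment on the bookkeeping you flag as the main obstacle: it resolves cleanly. Noting that $j=1728$ is supersingular iff $p\equiv 3\pmod 4$ and $j=0$ is supersingular iff $p\equiv 2\pmod 3$, and that the fiber of $X_0(N)\to X(1)$ over $j=1728$ (resp.\ $j=0$) contains $(\psi(N)+\nu_2)/2$ (resp.\ $(\psi(N)+2\nu_3)/3$) geometric points, a case split on $p\bmod 12$ shows the number of supersingular points on $X_0(N)$ equals $(p-1)\psi(N)/12+\nu_2/2\cdot[p\equiv 3\,(4)]+2\nu_3/3\cdot[p\equiv 2\,(3)]$, hence is always $\geq(p-1)\psi(N)/12$; no appeal to cusps or to the Eichler mass formula is needed.
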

\begin{proof}
See \cite{TV}, \S~4.1.
\end{proof}

\begin{remark}
Actually we can be slightly more precise in this lemma.
Hurwitz's formula gives an exact expression
\beq
g_0(N)=\frac{\psi(N)}{12}+1-\frac{\nu_\infty(N)}{2}-\frac{\nu_3(N)}{3}-\frac{\nu_2(N)}{4}
\eeq
where
\begin{itemize}
\item $\;\nu_\infty(N)=\sum_{d|N}\phi(\gcd(d,\frac{N}{d}))=\displaystyle\prod_{l^\nu||N}\textstyle\begin{cases}2l^{\frac{\nu-1}{2}} & \textrm{if }\nu\textrm{ odd}\\ (l+1) l^{\frac{\nu}{2}-1} & \textrm{if }\nu\textrm{ even}\end{cases}$
\item $\;\nu_3(N)=\begin{cases}\prod_{l|N}\left(1+\left(\frac{-3}{l}\right)\right) & \textrm{if }9\nmid N \\ 0 & \textrm{if }9\,|\,N\end{cases}$
\item $\;\nu_2(N)=\begin{cases}\prod_{l|N}\left(1+\left(\frac{-1}{l}\right)\right) & \textrm{if }4\nmid N \\ 0 & \textrm{if }4\,|\,N\end{cases}$
\end{itemize}
while the Eichler-Shimura relation gives
\beq
|X_0(N)(\F_{p^2})|=p^2+1+pg_0(N)-\tr T_{p^2}
\eeq
where the Hecke operator $T_{p^2}$ acts on the space of cusp forms $S_2(\Gamma_0(N))$,
and its trace can be computed explicitly, e.g. by the formula given in \cite{Miyake},
Th.~6.8.4 and Rem.~6.8.1, pp.~263--264:
\beq
\tr T_{p^2}=\frac{\psi(N)}{12}+\delta(N,p^2)-\sum_ta(t)\sum_fb(t,f)c(t,f)
\eeq
where $\delta(N,p^2)=p^2+p+1$ if $N>1$.
The terms $a(t)\sum_fb(t,f)c(t,f)$ are nonnegative, and their contribution
to the sum has a simple expression for certain special values of $t$:
\begin{itemize}
\item $\;\frac{1}{2}\,p\,\nu_\infty(N)$ for $t=\pm 2p$
\item $\;\frac{1}{3}\left(p+1-\left(\frac{-3}{p}\right)\right)\nu_3(N)$ for $t=\pm p\;$ (if $3\nmid N$)
\item $\;\frac{1}{4}\left(p+1-\left(\frac{-1}{p}\right)\right)\nu_2(N)$ for $t=0\;$ (if $2\nmid N$)
\end{itemize}
so that for $N>1$ prime to $6p$:
\beq
|X_0(N)(\F_{p^2})|\geq(p-1)\frac{\psi(N)}{12}\,+\,\frac{1\!-\!\left(\frac{-3}{p}\right)}{3}\nu_3(N)+\frac{1\!-\!\left(\frac{-1}{p}\right)}{4}\nu_2(N)
\eeq
Similar formulas can be derived for $2|N$ or $3|N$.
\end{remark}

For any infinite subset $\mathcal{A}$ of $\N$ and for any real $x>0$,
let
\beq
\lceil x\rceil_{\mathcal{A}}=\min\:\mathcal{A}\cap[x,+\infty[
\eeq
be the smallest element of $\mathcal{A}$ larger than or equal to $x$.
Also set
\beq
\epsilon_{\mathcal{A}}(x)=\sup_{y\geq x}\:\frac{\lceil y\rceil_{\mathcal{A}}-y}{y},
\eeq
so the function $\epsilon_{\mathcal{A}}$ is monotonously non-increasing,
and for any $x>0$,
the interval $[x,(1+\epsilon_{\mathcal{A}}(x))x]$ contains an element of $\mathcal{A}$.


For instance, if $p$ is a prime number, then
$\lceil x\rceil_{\psi(\N\setminus p\N)}$ is the smallest integer $n\geq x$
that can be written as $n=\psi(N)$ for an integer $N$ prime to $p$, and:

\begin{lemma}
\label{psiBertrand}
With these notations, for $p\neq2$ we have
\beq
\lceil x\rceil_{\psi(\N\setminus p\N)}\leq 2x\quad\textrm{for all $x\geq\frac{3}{2}$,}
\eeq
or said otherwise:
\beq
\epsilon_{\psi(\N\setminus p\N)}(3/2)\leq 1.
\eeq
\end{lemma}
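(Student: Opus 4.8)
The plan is to exhibit inside the value set $\psi(\N\setminus p\N)$ a sparse subfamily whose consecutive elements grow by a factor of exactly $2$ --- this is the Bertrand-type spacing that makes the factor $2$ in the statement possible. The natural candidate, available precisely because $p\neq 2$, is the family of powers of $2$: every $2^a$ is prime to $p$, and from the definition of $\psi$ one computes $\psi(1)=1$ and $\psi(2^a)=2^a(1+1/2)=3\cdot 2^{a-1}$ for $a\geq 1$. Hence the set $S=\{3\cdot 2^j:j\geq 0\}=\{3,6,12,24,\dots\}$ is contained in $\psi(\N\setminus p\N)$, and it is a geometric progression of ratio $2$.

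Next, since $S\subseteq\psi(\N\setminus p\N)$ we have $\lceil x\rceil_{\psi(\N\setminus p\N)}\leq\lceil x\rceil_S$ for every $x$, so it suffices to bound $\lceil x\rceil_S$ above by $2x$ when $x\geq 3/2$. Given such an $x$, I would let $j\geq 0$ be minimal with $3\cdot 2^j\geq x$ (this exists since $3\cdot 2^j\to\infty$), so that $\lceil x\rceil_S=3\cdot 2^j$. If $j=0$ then $\lceil x\rceil_S=3\leq 2x$ because $x\geq 3/2$; this boundary case is where the inequality is tight. If $j\geq 1$, minimality forces $3\cdot 2^{j-1}<x$, whence $\lceil x\rceil_S=2\cdot(3\cdot 2^{j-1})<2x$. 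In either case $\lceil x\rceil_{\psi(\N\setminus p\N)}\leq 2x$, which is the first displayed inequality.

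Finally, the two formulations are equivalent: for every $y\geq 3/2$ the bound just proved reads $(\lceil y\rceil_{\psi(\N\setminus p\N)}-y)/y\leq 1$, and taking the supremum over $y\geq 3/2$ gives $\epsilon_{\psi(\N\setminus p\N)}(3/2)\leq 1$; conversely this last inequality returns the displayed one. I do not expect any serious obstacle: the only real content is the choice of the subfamily, and the one point requiring care is that this choice uses the hypothesis $p\neq 2$. It is worth noting that the analogous trick with powers of $3$ only yields ratio $3$ (since $\psi(3^a)=4\cdot 3^{a-1}$), so the excluded case $p=2$ would demand a genuinely different idea --- which is presumably why the statement is restricted to odd $p$.
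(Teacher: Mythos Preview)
Your proof is correct and takes essentially the same approach as the paper: both exploit that $\psi(2^{j+1})=3\cdot 2^j$ lies in $\psi(\N\setminus p\N)$ for $p\neq 2$, and that this geometric progression of ratio~$2$ forces an element into $[x,2x]$ for every $x\geq 3/2$. The paper's version is just more compressed, writing down the explicit index $j=\lfloor\log_2(2x/3)\rfloor$ rather than arguing by minimality and case split.
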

\smallskip
\begin{proof}
Indeed, for
$j=\lfloor\frac{\log 2x/3}{\log 2}\rfloor$,
we have $x<3\cdot2^j=\psi(2^{j+1})\leq2x$.
\end{proof}

\begin{proposition}
\label{Ballet+}
Let $p\geq 7$ be a prime number.
Then for all $k>\frac{p^2+p+1}{2}$ we have
\beq
\frac{1}{k}\mu^{\operatorname{sym}}_{p^2}(k)\leq 2+\frac{\frac{1}{12}\left\lceil\frac{24k-12}{p-2}\right\rceil_{\psi(\N\setminus p\N)}-1}{k}.
\eeq
\end{proposition}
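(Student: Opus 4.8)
The plan is to apply Lemma~\ref{muXqk} (equivalently Lemma~\ref{mu2k+g-1}) with $q=p^2$, choosing for $X$ a modular curve $X_0(N)$ whose genus is kept under control by means of the Bertrand-type estimate of Lemma~\ref{psiBertrand}. Concretely, I would set $M=\bigl\lceil\frac{24k-12}{p-2}\bigr\rceil_{\psi(\N\setminus p\N)}$; by the very definition of $\lceil\,\cdot\,\rceil_{\psi(\N\setminus p\N)}$ there is then an integer $N$ prime to $p$ with $\psi(N)=M\geq\frac{24k-12}{p-2}$, equivalently $(p-2)\psi(N)\geq 12(2k-1)$. Take $X=X_0(N)$, regarded as a curve over $\F_{p^2}$ (its genus is unchanged by the base change $\F_p\to\F_{p^2}$); Lemma~\ref{courbesmodulaires} gives genus $g:=g_0(N)\leq\psi(N)/12=M/12$ and $|X(\F_{p^2})|\geq(p-1)\psi(N)/12$.

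The heart of the argument is to check that $X$ lies in $\mathcal{X}_{p^2,k}$, i.e.\ to verify its three defining conditions with $q=p^2$ (so $q^{1/2}=p$ and $q^{(k-1)/2}=p^{k-1}$). For (c): $|X(\F_{p^2})|\geq(p-1)\psi(N)/12=(p-2)\psi(N)/12+\psi(N)/12\geq(2k-1)+g=2k+g-1$. For (b): since $p\geq 7$ gives $p-1>5$ and $\psi(N)\geq 1$, we get $5g\leq 5\psi(N)/12<(p-1)\psi(N)/12\leq|X(\F_{p^2})|$. For (a): the hypothesis $k>\frac{p^2+p+1}{2}$ forces $\frac{24k-12}{p-2}\geq\frac32$, so Lemma~\ref{psiBertrand} applies and yields $M\leq 2\cdot\frac{24k-12}{p-2}$, whence $g\leq M/12\leq\frac{4k-2}{p-2}$; it then remains to note that $\frac{4k-2}{p-2}\leq\frac12\bigl(p^{k-1}(p-1)-1\bigr)$, which is routine since the right-hand side grows exponentially in $k$ while the left-hand side is linear, and $k>\frac{p^2+p+1}{2}$ together with $p\geq 7$ leaves ample room.

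Granting $X\in\mathcal{X}_{p^2,k}$, the set $\mathcal{X}_{p^2,k}$ is nonempty, and Lemma~\ref{muXqk} gives at once $\frac1k\mu^{\operatorname{sym}}_{p^2}(k)\leq 2+\frac{\min_{X'\in\mathcal{X}_{p^2,k}}g(X')-1}{k}\leq 2+\frac{g-1}{k}\leq 2+\frac{M/12-1}{k}$, which is precisely the claimed inequality. (Alternatively, one can bypass $\mathcal{X}_{p^2,k}$: (b) and (a) above are exactly the hypotheses $|X(\F_{p^2})|>5g$ and the left inequality of \eqref{encadrek} in Lemma~\ref{mu2k+g-1}, while (c) yields $k\leq\frac{|X(\F_{p^2})|+1-g}{2}$, the right inequality, so that $\mu^{\operatorname{sym}}_{p^2}(k)\leq 2k+g-1\leq 2k+M/12-1$.)

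The only genuinely delicate point is condition (a): I need the genus of the chosen modular curve to remain well below $p^{k-1}$, and this is exactly what Lemma~\ref{psiBertrand} buys me --- without a bound on the gaps between consecutive values of $\psi$ on integers prime to $p$, the genus $g_0(N)$ could overshoot the target $\frac{24k-12}{p-2}$ by an uncontrolled amount. Everything else is bookkeeping with the two estimates furnished by Lemma~\ref{courbesmodulaires}.
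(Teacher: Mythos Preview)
Your proposal is correct and follows essentially the same route as the paper's own proof: choose $N$ prime to $p$ with $\psi(N)=\bigl\lceil\frac{24k-12}{p-2}\bigr\rceil_{\psi(\N\setminus p\N)}$, take $X=X_0(N)$, verify conditions (a), (b), (c) of $\mathcal{X}_{p^2,k}$ via Lemma~\ref{courbesmodulaires} and Lemma~\ref{psiBertrand}, and conclude by Lemma~\ref{muXqk}. Your verifications of (b) and (c) are arranged slightly differently but amount to the same estimates $|X(\F_{p^2})|-5g\geq(p-6)\psi(N)/12>0$ and $|X(\F_{p^2})|-g\geq(p-2)\psi(N)/12\geq 2k-1$ used in the paper, and your treatment of (a) is identical.
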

\begin{proof}
Choose $N$ prime to $p$ such that
$\psi(N)=\left\lceil\frac{24k-12}{p-2}\right\rceil_{\psi(\N\setminus p\N)}$
and set $X=X_0(N)$.
Then, by \eqref{g0N<psi} and \eqref{|X0N|>psi} we have $|X(\F_{p^2})|-g\geq(p-2)\frac{\psi(N)}{12}$,
so condition (c) before Lemma~\ref{muXqk} is satisfied.

Likewise we have $|X(\F_{p^2})|-5g\geq(p-6)\frac{\psi(N)}{12}$,
so for $p\geq7$ condition (b) is satisfied too.


Last, by Lemma~\ref{psiBertrand} we have
$\psi(N)=\left\lceil\frac{24k-12}{p-2}\right\rceil_{\psi(\N\setminus p\N)}\leq\frac{48k-24}{p-2}$
so
\beq
g\leq\frac{\psi(N)}{12}\leq\frac{4k-2}{p-2},
\eeq
and for $p\geq7$ and $k>\frac{p^2+p+1}{2}$, this last quantity is easily
shown to be less than $\frac{1}{2}(p^{k-1}(p-1)-1)$.
Thus condition (a) is satisfied, and we conclude with Lemma~\ref{muXqk}.
\end{proof}

\begin{remark}
\label{remgapspsi}
Thanks to this proposition, any (effective) upper bound
on the function $\lceil .\rceil_{\psi(\N\setminus p\N)}$,
or on $\epsilon_{\psi(\N\setminus p\N)}$,
translates into an (effective) upper bound on the $\mu^{\operatorname{sym}}_{p^2}(k)$.
Our task is then, for any given real $x>0$, to find an integer $N$
prime to $p$ such that $\psi(N)$ is larger than or equal to $x$ but as
small as possible. A quick analysis suggests two natural approaches to this problem.

First, one can look for $N$ among integers having only small prime factors.
Indeed, let $\mathcal{B}=\{l_1,\dots,l_B\}$ be a set of prime numbers, $p\not\in\mathcal{B}$.
Set $N_{\mathcal{B}}=\prod_{i=1}^Bl_i$
and assume $\psi(N_{\mathcal{B}})=\prod_{i=1}^B(l_i+1)<x$.
Then if $N=N'N_{\mathcal{B}}$ where $N'$ has all its prime factors
in $\mathcal{B}$, we have $\psi(N)=N'\psi(N_{\mathcal{B}})$.
Thus, if we can find an integer $N'\geq\frac{x}{\psi(N_{\mathcal{B}})}$
as small as possible
with all its prime factors in $\mathcal{B}$,
we deduce an upper bound on $\lceil x\rceil_{\psi(\N\setminus p\N)}$.
For $\mathcal{B}=\{2\}$ this is precisely Lemma~\ref{psiBertrand}.
It would be interesting to optimize the choice $\mathcal{B}$
(possibly depending on $x$) in order to get better estimates.

At the opposite, one can choose $N$ among integers having
only large prime factors. Indeed, if $N$
has no prime factor smaller than $N^{1/u}$,
then $\psi(N)\leq N\left(1+\frac{1}{N^{1/u}}\right)^u$,
and if we can produce such an $N\geq x$ as small as possible,
then, for a convenient choice of $u$, one could hope to get a
bound close enough to $\lceil x\rceil_{\psi(\N\setminus p\N)}$.
The extreme case is $u=1$, which means we look only at $N$ prime.
We then get the upper bound
\beq
\label{majpremier}
\lceil x\rceil_{\psi(\N\setminus p\N)}\leq\lceil x-1\rceil_{\mathcal{P}}+1\quad\textrm{for $x>p+1$}
\eeq
where $\mathcal{P}$ is the set of prime numbers
(indeed, $N=\lceil x-1\rceil_{\mathcal{P}}$ is a prime number larger than $p$,
and $\psi(N)=N+1\geq x$).
This allows to use all known results on the function $\epsilon_{\mathcal{P}}$;
for instance, Bertrand's postulate, proved by Chebyshev,
gives $\epsilon_{\mathcal{P}}(1)=1$, and combined with \eqref{majpremier},
it provides essentially the same bound as in Lemma~\ref{psiBertrand}. 
Several sharper bounds on $\epsilon_{\mathcal{P}}$ are known, and we list
their consequences in the corollary below.
However, here again, it would still be interesting to study
whether a convenient choice of $u>2$ (possibly depending on $x$),
would give significantly better.
\end{remark}

\begin{corollary}
\label{liste}
Let $p\geq 7$ be a prime number.
Then
\begin{enumerate}[(i)]
\item for all $k>\frac{p^2+p+1}{2}$,
\beq
\frac{1}{k}\mu^{\operatorname{sym}}_{p^2}(k)\leq 2\left(1+\frac{1+\epsilon_{\mathcal{P}}\!\left(\frac{24k}{p-2}\right)}{p-2}\right)
\eeq
\item for all $k\geq1$,
\beq
\frac{1}{k}\mu^{\operatorname{sym}}_{p^2}(k)\leq 2\left(1+\frac{2}{p-2}\right)
\eeq
\item for all $k\geq1$,
\beq
\frac{1}{k}\mu^{\operatorname{sym}}_{p^2}(k)\leq 2\left(1+\frac{1+\frac{10}{139}}{p-2}\right)
\eeq
\item for all $k\geq e^{50}p$,
\beq
\frac{1}{k}\mu^{\operatorname{sym}}_{p^2}(k)\leq 2\left(1+\frac{1.000\,000\,005}{p-2}\right)
\eeq
\item for all $k\geq 16\,531\,(p-2)$,
\beq
\frac{1}{k}\mu^{\operatorname{sym}}_{p^2}(k)\leq 2\left(1+\frac{1+\frac{1}{25\log^2\frac{24k}{p-2}}}{p-2}\right)
\eeq
\item for all $k$ large enough,
\beq
\frac{1}{k}\mu^{\operatorname{sym}}_{p^2}(k)\leq 2\left(1+\frac{1+\frac{1}{\left(\frac{24k}{p-2}\right)^{0.475}}}{p-2}\right).
\eeq
\end{enumerate}
\end{corollary}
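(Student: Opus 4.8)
The plan is to establish part~(i) as a master inequality and then read off (ii)--(vi) from it by substituting various known effective bounds on the prime-gap function $\epsilon_{\mathcal{P}}$, supplementing with curves of genus $0$ or $1$ for the small values of $k$ in the statements claimed for all $k\ge 1$.

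For~(i): fix a prime $p\ge 7$ and an integer $k>\frac{p^2+p+1}{2}$. First I would check that the hypothesis $x>p+1$ of~\eqref{majpremier} holds for $x=\frac{24k-12}{p-2}$; this reduces to $24k>p^2-p+10$, which is implied by $24k>12(p^2+p+1)$. Then I chain together~\eqref{majpremier}, the monotonicity (non-decreasing) of $\lceil\,\cdot\,\rceil_{\mathcal{P}}$ applied to the inequality $\frac{24k-12}{p-2}-1\le\frac{24k}{p-2}$, and the defining property that $[x,(1+\epsilon_{\mathcal{P}}(x))x]$ always contains a prime, to obtain
\[
\left\lceil\tfrac{24k-12}{p-2}\right\rceil_{\psi(\N\setminus p\N)}\ \le\ \left(1+\epsilon_{\mathcal{P}}\!\left(\tfrac{24k}{p-2}\right)\right)\tfrac{24k}{p-2}+1.
\]
Feeding this into Proposition~\ref{Ballet+} and noting that the leftover additive term $\tfrac{1}{12}-1$ is negative yields~(i). (Alternatively one can bypass Proposition~\ref{Ballet+}: set $N=\lceil\frac{24k}{p-2}\rceil_{\mathcal{P}}$, which in this range is automatically $>p$ hence prime to $p$, take $X=X_0(N)$, and verify conditions~(a)--(c) in the definition of $\mathcal{X}_{p^2,k}$ from Lemma~\ref{courbesmodulaires}, using Bertrand's postulate to control $g_0(N)\le\psi(N)/12=\frac{N+1}{12}$; then conclude with Lemma~\ref{muXqk}.)

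For~(ii)--(vi), in the range $k>\frac{p^2+p+1}{2}$ each one is simply~(i) with an upper bound for $\epsilon_{\mathcal{P}}\!\left(\frac{24k}{p-2}\right)$ inserted: Bertrand's postulate $\epsilon_{\mathcal{P}}\le 1$ for~(ii); the prime-gap estimate $\epsilon_{\mathcal{P}}(x)\le\frac{10}{139}$ for $x\ge 139.2$ for~(iii), valid because $\frac{24k}{p-2}$ attains its minimum $\frac{24\cdot 29}{5}=139.2$ at $p=7$, $k=29$; an explicit prime-gap bound giving $\epsilon_{\mathcal{P}}(x)\le 5\cdot10^{-9}$ for $x\ge 24e^{50}$ for~(iv), which applies since $k\ge e^{50}p$ forces $\frac{24k}{p-2}>24e^{50}$; Dusart's theorem that $(x,x(1+\frac{1}{25\ln^2 x})]$ contains a prime for $x\ge 396738$ for~(v), applicable once $\frac{24k}{p-2}\ge 24\cdot16531=396744$, i.e.\ $k\ge 16531(p-2)$; and Baker--Harman--Pintz, $\epsilon_{\mathcal{P}}(x)\le x^{-0.475}$ for $x$ large, for~(vi). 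It then remains to handle $1\le k\le\frac{p^2+p+1}{2}$, which matters only for~(ii) and~(iii) (for~(iv) and~(v) this range is non-empty only for large $p$, where the same argument applies): since $\frac{p^2+p+1}{2}<\frac{(p+1)^2}{2}$, the elliptic-curve bound~\eqref{inegShokro} with $e(p^2)=2p$ gives $\mu^{\operatorname{sym}}_{p^2}(k)\le 2k$ on this range, and each of the claimed right-hand sides exceeds $2k$, so the inequalities hold trivially there.

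The only genuine difficulty is the bookkeeping in~(i): what~\eqref{majpremier} naturally produces is $\epsilon_{\mathcal{P}}$ evaluated at $\frac{24k-12}{p-2}-1$, which is slightly below $\frac{24k}{p-2}$, so the monotonicity of $\epsilon_{\mathcal{P}}$ itself points the wrong way. The remedy is to pass through the integer-valued, monotone-non-decreasing $\lceil\,\cdot\,\rceil_{\mathcal{P}}$ and then to check that the accumulated additive constants (the $+1$ of~\eqref{majpremier}, the $-12$ inside the ceiling of Proposition~\ref{Ballet+}, and the resulting $\tfrac{1}{12}-1$) leave just enough slack for the clean closed form $2\bigl(1+\frac{1+\epsilon_{\mathcal{P}}(24k/(p-2))}{p-2}\bigr)$ to come out. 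Everything else is either formal or an appeal to the cited estimates on gaps between primes.
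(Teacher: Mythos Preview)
Your proposal is correct and follows essentially the same route as the paper: derive (i) from Proposition~\ref{Ballet+} via \eqref{majpremier} and the monotonicity of $\lceil\,\cdot\,\rceil_{\mathcal{P}}$, then plug in the various prime-gap estimates for (ii)--(vi), with \eqref{inegShokro} covering the small-$k$ range. The only minor deviation is in (ii), where the paper goes directly from Proposition~\ref{Ballet+} and Lemma~\ref{psiBertrand} rather than through (i) and Bertrand's postulate, but the two arguments are equivalent; your treatment of the overlap range for (iv) and (v) is in fact more careful than the paper's terse ``follows from (i)''.
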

\begin{proof}
Item (i) follows from Proposition~\ref{Ballet+}, from \eqref{majpremier},
and the obvious inequality
$\left\lceil\frac{24k-12}{p-2}-1\right\rceil_{\mathcal{P}}\leq\left\lceil\frac{24k}{p-2}\right\rceil_{\mathcal{P}}$.

Item (ii) follows from \eqref{inegShokro},
Proposition~\ref{Ballet+}, and Lemma~\ref{psiBertrand}.

Noting that for $p\geq 7$ and
$k>\frac{p^2+p+1}{2}$ we have $\frac{24k}{p-2}>139$,
item (iii) follows from \eqref{inegShokro}, from (i),
and from $\epsilon_{\mathcal{P}}(139)=10/139$.
To justify this last equality, observe that if
$p_1=2$, $p_2=3$, $p_3=5$, $p_4=7$, $p_5=11$, \dots {}
is the sequence of prime numbers,
then for all $n\leq n'$ we have
\beq
\epsilon_{\mathcal{P}}(p_n)=\max\left(\epsilon_{\mathcal{P}}(p_{n'}),\,\max_{n\leq j<n'}\frac{p_{j+1}-p_j}{p_j}\right).
\eeq
Set $p_n=139$, estimate $\epsilon_{\mathcal{P}}(p_{n'})$ for
$p_{n'}=2\,010\,881$ using \cite{Schoenfeld}
(or for $p_{n'}=396\,833$ using \cite{Dusart})
and conclude by explicitly computing the (finitely many) remaining terms
for $n\leq j<n'$.

Likewise, items (iv), (v) and (vi) follow from (i)
and the estimates on $\epsilon_{\mathcal{P}}$ that are given in
\cite{RS},
\cite{Dusart} (preprint version only, beware that the published verion is different),
and \cite{BHP}, respectively. 
\end{proof}

\section{More recent developments, and questions that remain open}

\noindent\textbf{3.1. New estimates on gaps between primes.}
Corollary~\ref{liste}(i) allows to systematically translate any estimate
on gaps between primes into a bound on $\mu^{\operatorname{sym}}_{p^2}(k)$.
In Corollary~\ref{liste}(iii)-(vi) we listed such bounds, based on the
state of the literature in 2011, i.e. at the time when \cite{2D-G} was written.

Certainly many new results of this type have been published since then,
and will be published in the future.
One such result is Dudek's \cite{Dudek}, that has been used by Ballet and Zykin \cite{BZ} (see \S 3.5 below),
and which asserts that for any real $x>e^{e^{33.3}}$ there is a prime between $x$ and $x+3x^{2/3}$,
or with our notations, $\epsilon_{\mathcal{P}}(x)\leq 3x^{-1/3}$.
Combined with Corollary~\ref{liste}(i), this gives at once:
\begin{corcont}(vii) For $p\geq7$ and $k\geq\frac{p-2}{24}e^{e^{33.3}}$,
\beq
\frac{1}{k}\mu^{\operatorname{sym}}_{p^2}(k)\leq 2\left(1+\frac{1+\frac{3}{\left(\frac{24k}{p-2}\right)^{1/3}}}{p-2}\right).
\eeq
\end{corcont}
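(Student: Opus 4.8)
The plan is to combine Corollary~\ref{liste}(i) with Dudek's estimate on gaps between primes. Corollary~\ref{liste}(i) asserts that for $p\geq7$ and $k>\frac{p^2+p+1}{2}$ we have
\beq
\frac{1}{k}\mu^{\operatorname{sym}}_{p^2}(k)\leq 2\left(1+\frac{1+\epsilon_{\mathcal{P}}\!\left(\frac{24k}{p-2}\right)}{p-2}\right),
\eeq
so it suffices to produce a bound on $\epsilon_{\mathcal{P}}(x)$ valid at $x=\frac{24k}{p-2}$. Dudek's result \cite{Dudek} gives exactly such a bound, namely that for any real $x>e^{e^{33.3}}$ there is a prime in the interval $[x,x+3x^{2/3}]$; in the notation of the excerpt this reads $\epsilon_{\mathcal{P}}(x)\leq 3x^{2/3}/x=3x^{-1/3}$ for $x\geq e^{e^{33.3}}$.

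The first step is to identify the threshold on $k$ that guarantees $\frac{24k}{p-2}\geq e^{e^{33.3}}$, so that Dudek's estimate applies at the relevant point. This is immediate: $\frac{24k}{p-2}\geq e^{e^{33.3}}$ is equivalent to $k\geq\frac{p-2}{24}e^{e^{33.3}}$, which is the hypothesis in the statement. One should also check that this threshold is compatible with the hypothesis $k>\frac{p^2+p+1}{2}$ needed for Corollary~\ref{liste}(i); since $e^{e^{33.3}}$ is astronomically large compared to any polynomial in $p$ for the primes $p\geq7$ under consideration, the condition $k\geq\frac{p-2}{24}e^{e^{33.3}}$ easily subsumes $k>\frac{p^2+p+1}{2}$, and I would note this in one line without further computation.

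The second step is simply to substitute. Since $\epsilon_{\mathcal{P}}$ is monotonically non-increasing and $\frac{24k}{p-2}\geq e^{e^{33.3}}$, we get $\epsilon_{\mathcal{P}}\!\left(\frac{24k}{p-2}\right)\leq 3\left(\frac{24k}{p-2}\right)^{-1/3}$; plugging this into the displayed inequality above yields exactly the claimed bound. There is essentially no obstacle here — the only thing to be careful about is matching Dudek's hypothesis to the argument $\frac{24k}{p-2}$ cleanly, and making sure the monotonicity of $\epsilon_{\mathcal{P}}$ is invoked so that the estimate transfers from the threshold to all larger values. The argument is purely a matter of quoting the two prior results and chaining the inequalities, so the proof is just a couple of sentences.
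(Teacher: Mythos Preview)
Your proposal is correct and follows exactly the same approach as the paper, which simply says the bound follows ``at once'' by combining Corollary~\ref{liste}(i) with Dudek's estimate $\epsilon_{\mathcal{P}}(x)\leq 3x^{-1/3}$ for $x>e^{e^{33.3}}$. Your added remarks (that the threshold on $k$ subsumes $k>\frac{p^2+p+1}{2}$, and the appeal to monotonicity of $\epsilon_{\mathcal{P}}$) just make explicit what the paper leaves tacit.
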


Actually, this Corollary~\ref{liste}(vii) 
is weaker than Corollary~\ref{liste}(vi)
because the exponent $1/3$ is not as good as the exponent $0.475$.
But it is fully effective, in the sense that the range of $k$ for which
it holds is given explicitely from Dudek's work \cite{Dudek},
while in \cite{BHP} only the existence is proved (although the authors
observe it could be made explicit with enough work).

\vspace{\baselineskip}

How far could we hope to go with this method?
It is known that, under the Riemann hypothesis,
we should have $\epsilon_{\mathcal{P}}(x)=\widetilde{O}(x^{-1/2})$.
Combined with Corollary~\ref{liste}(i), this gives:
\beq
\frac{1}{k}\mu^{\operatorname{sym}}_{p^2}(k)\leq 2\left(1+\frac{1+\widetilde{O}(k^{-1/2})}{p-2}\right).
\eeq
Ultimately, it is conjectured $\epsilon_{\mathcal{P}}(x)=O(\log^2(x)/x)$ \cite{Cramer},
which would give likewise:
\beq
\frac{1}{k}\mu^{\operatorname{sym}}_{p^2}(k)\leq 2\left(1+\frac{1+O(\log^2(k)/k)}{p-2}\right).
\eeq

\vspace{\baselineskip}
\noindent\textbf{3.2. Gaps between prime numbers in a given residue class.}
(This is a translation of the paragraph at the bottom of \cite[p.~31]{2D-G},
and is also discussed in \cite[Rem.~5.5]{HR-JComp}.)

Our main results concerned a base field $\F_q$ where $q=p^2$ 
is a prime square, and used evaluation-interpolation on (classical)
modular curves.

Using more general Shimura curves, as those from \cite{STV},
one could get similar results for $q=p^{2m}$
with arbitrary $m$.
This case is also mentionned in \cite[Th.~3.1]{Ballet2008}, however,
we point out another error in the proof given there:
in the second half of this proof,
Ballet applies Bertrand's postulate to the primes that correspond
to the levels of these Shimura curves;
but he forgets that, from the very construction of \cite{STV},
which he recalls in his \cite[Prop.~3.1(2)]{Ballet2008},
he has to deal not with the set of all prime numbers,
but only with those that split completely
in a certain abelian extension $L$ of $\Q$.
This splitting condition translates into a certain congruence condition.
Thus, Bertrand's postulate does not apply there.
Still, in principle this strategy of proof could work,
but for this, instead of Bertrand's postulate,
one should substitute an estimate,
such as the one from \cite{Kadiri}, on the gaps between
primes that live in some given residue class.

However, still other families of curves could be used,
for instance Drinfeld modular curves. 
At this stage it is not clear which approach will produce
the best effective bounds.

\vspace{\baselineskip}
\noindent\textbf{3.3. Gaps in the set of values of the Dedekind psi function.}
In Remark~\ref{remgapspsi} we outlined two strategies that could lead
to estimates on $\epsilon_{\psi(\N\setminus p\N)}$, that is,
on gaps in the set of values of the Dedekind psi function (at integers prime
to $p$).
However, quickly we restricted to values of $\psi$ at prime numbers,
so we only had to consider the more studied function $\epsilon_{\mathcal{P}}$.

Obviously, considering all values of $\psi$ instead of only its values at primes,
can only lead to smaller gaps, hence to better bounds on the complexity of multiplication.
The question is: how much better can we get?

Initially, the author hoped to get significantly stronger bounds in this way,
and this hope was one of the reasons for delaying the publication of this work
(see also \cite[Rem.~5.8]{HR-JComp}).
A motivation for this was Corollary~\ref{liste}(ii),
obtained very easily by considering the values of $\psi$ at powers of $2$:
by comparison, it could also have been derived using values at prime numbers,
but this requires Bertrand's postulate, whose proof, given by Chebyshev,
is certainly not so trivial.

Unfortunately, the author is now much more pessimistic, for the following reason.

Very likely, a method that bounds gaps in the set of values of $\psi$,
should apply to a larger class of arithmetic functions.
To any map
\beq
a:\mathcal{P}\longto\Z
\eeq
that takes only finitely many different values,
associate an arithmetic function $f_a$ by the formula
\beq
f_a(N)=N\prod_{\substack{l|N\\ \textrm{$l$ prime}}}\left(1+\frac{a(l)}{l}\right)
\eeq
and let
\beq
\mathcal{S}_a=f_a(\N_{>0})
\eeq
be the set of values of $f_a$.
For instance:
\begin{itemize}
\item if $a(l)=1$ for all $l$, then
\beq
\mathcal{S}_a=\psi(\N_{>0})
\eeq
is the set of all values of the Dedekind psi function
\item if $a(p)=-p$ and $a(l)=1$ for all $l\neq p$, then
\beq
\mathcal{S}_a=\{0\}\cup\psi(\N\setminus p\N)
\eeq
is precisely the set appearing in our application to bilinear complexity
\item if $a(l)=-1$ for all $l$, then
\beq
\mathcal{S}_a=\phi(\N_{>0})
\eeq
is the set of all values of the Euler totient function $\phi$.
\end{itemize}
So we're interested in estimates on the gaps between elements of such a set $\mathcal{S}_a$,
and more precisely, on upper bounds on the associated function $\epsilon_{\mathcal{S}_a}$.
Specializing to values of $f_a$ at primes readily gives
\beq
\epsilon_{\mathcal{S}_a}(x)\leq\epsilon_{\mathcal{P}}(x)+O(1/x)
\eeq
but our hope would be to get a bound significantly sharper.

Now several authors already studied the distribution of the values of $\phi$,
and in particular, in \cite[p.~70]{Ford} it is asked:
``Can it be shown, for example, that for $x$ sufficiently large,
there is a totient between $x$ and $x+x^{1/2}$?''

That means that the inequality $\epsilon_{\phi(\N_{>0})}(x)\leq x^{-1/2}$
is still an open question.
Or said otherwise, one does not know significantly better estimates
on the gaps in the set of all values of $\phi$,
than what one could derive from its values at primes
(compare: $\epsilon_{\mathcal{P}}(x)\leq x^{-0.475}$ by \cite{BHP},
and $\epsilon_{\mathcal{P}}(x)=O(x^{-1/2}\log^2x)$ under RH).

Thus, contrary to the author's initial expectations,
this now leaves very little hope for the similar question for $\psi$.

\vspace{\baselineskip}
\noindent\textbf{3.4. Generalizations of the basic Chudnovsky-Chudnovsky method.}
At the very end of \cite[section~5]{2D-G}, it is discussed how our
optimal solution to Riemann-Roch systems could be combined with
extensions of the Chudnovsky-Chudnovsky method such as \cite{CO},
that use evaluation at points of higher degree and with multiplicities.
This discussion was not reproduced here, because these results are
now superseded by \cite[Th.~5.2(c)]{HR-JComp}, which uses an even finer
notion of generalized evaluation.
Namely, the bounds from \cite{HR-JComp} involve the quantities
\beq
\mu^{\operatorname{sym}}_q(d,u)=\mu^{\operatorname{sym}}_{\F_q}(\F_{q^d}[t]/(t^u))
\eeq
which allow to take into account both higher degree $d$ and multiplicity $u$
at the same time and in the most accurate way.

Still there is a difficulty.
Very often, generalized evaluation,
and more precisely evaluation at points of higher degree,
is used when dealing with curves
that do not have that many points of degree~$1$.
Thus, it becomes useful, for instance,
if one works over a field $\F_p$ of prime order.
However, our method, in Proposition~\ref{constrD-Qet2D-G}
as well as in \cite[Th.~5.2(c)]{HR-JComp}, still requires curves
with sufficiently many points of degree~$1$,
as asked by condition~\eqref{5g}.
In practice, this makes our construction unsuitable for these specific applications,
and instead, one has to revert to suboptimal methods.
A possible solution would be to adapt Proposition~\ref{constrD-Qet2D-G}
and try to make this optimal construction work, say,
with curves having sufficiently many
points of degree~$2$ (instead of degree~$1$);
but this is still an open question.

However, there are two alternative directions where optimality can be reached.

\textbf{3.4.1. Classical bilinear complexity.}
A first direction
is if one is interested in the classical bilinear complexity $\mu_q(k)$,
instead of the symmetric bilinear complexity $\mu^{\operatorname{sym}}_q(k)$.
Note that the classical works \cite{ChCh87}\cite{ChCh88} and \cite{STV}
all dealt only with $\mu_q(k)$, as did also \cite{Ballet2008} and \cite{2D-G}.
Indeed, the symmetric complexity $\mu^{\operatorname{sym}}_q(k)$ was first
introduced in this context only in \cite{HR-JComp}, together with the
importance of the distinction between these two notions.
In particular it is observed there that classical bilinear complexity allows
asymmetric evaluation-interpolation algorithms, whose associated
Riemann-Roch systems are easier to solve optimally.
In this setting, instead of \cite[Th.~5.2(c)]{HR-JComp},
we can use \cite[Th.~5.2(a)]{HR-JComp}, which does not require a condition
like~\eqref{5g} on the number of points of degree~$1$.

For instance, it specializes to the following:
\begin{lemma}
\label{muleqn1+3n2}
Let $X$ be a curve of genus $g$ over a finite field $\F_q$.
Suppose $q\geq7$ and $X$ admits 
\begin{itemize}
\item a closed point $Q$ of degree~$k$
\item $n_1$ closed points of degree~$1$
\item $n_2$ closed points of degree~$2$
\end{itemize}
with
\beq
n_1+2n_2\geq 2k+g-1.
\eeq
Then we have
\beq
\mu_q(k)\leq n_1+3n_2.
\eeq
\end{lemma}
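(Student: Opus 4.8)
The plan is to invoke a higher-degree version of the Chudnovsky--Chudnovsky construction, namely \cite[Th.~5.2(a)]{HR-JComp}, which the text explicitly says does not impose a condition like \eqref{5g} on the number of rational points. The idea is that each closed point of degree~$2$ behaves, for the purposes of evaluation-interpolation, like a place where one evaluates into $\F_{q^2}$, a $2$-dimensional $\F_q$-algebra; multiplication in $\F_{q^2}$ over $\F_q$ has classical bilinear complexity $\mu_q(2)=3$ (this is the source of the ``$3n_2$'' in the conclusion), while each degree-$1$ point contributes a single multiplication. So the first step is to set up the data: take the divisor $G = P_1 + \cdots + P_{n_1} + R_1 + \cdots + R_{n_2}$ where the $P_i$ are the degree-$1$ places and the $R_j$ the degree-$2$ places, so that $\deg G = n_1 + 2n_2 \geq 2k+g-1$.

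Next I would produce the auxiliary divisor $D$. Here is exactly where the hypothesis $q \geq 7$ enters, but in a softened form: rather than needing $|X(\F_q)| > 5g$, the asymmetric Riemann-Roch system underlying \cite[Th.~5.2(a)]{HR-JComp} only needs that such a $D$ exist with $D - Q$ nonspecial and $2D - G$ without sections, and the text asserts this variant avoids the degree-$1$-points hypothesis entirely; the ``$q\geq 7$'' is the residual arithmetic condition for the optimal solvability of that system (in the same spirit as condition (b) in Lemma~\ref{mu2k+g-1}, but now not referencing $|X(\F_q)|$). Concretely I would appeal to the analogue of Proposition~\ref{constrD-Qet2D-G} that is packaged inside \cite[Th.~5.2(a)]{HR-JComp}: with $\deg G \geq 2k+g-1$ there is an $\F_q$-rational $D$ of degree $g-1+k$ with $D-Q$ nonspecial and $2D-G$ nonspecial, hence without sections since $\deg(2D-G) \leq g-1$.

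The final step is to assemble the algorithm and count its length. The evaluation map $L(D) \to \F_q(Q) = \F_{q^k}$ is surjective (as $D-Q$ is nonspecial), and the evaluation map $L(2D) \to \bigoplus_i \F_q(P_i) \oplus \bigoplus_j \F_q(R_j)$ is injective (as $2D-G$ has no section). Lifting $x,y \in \F_{q^k}$ to $f,h \in L(D)$, the product $xy$ is read off from $fh \in L(2D)$ via its values at the $P_i$ and $R_j$; at each $P_i$ this costs one $\F_q$-multiplication, and at each $R_j$ one must multiply two elements of $\F_{q^2}$, which costs $\mu_q(2) = 3$ bilinear multiplications (using an optimal, possibly asymmetric, bilinear algorithm for $\F_{q^2}/\F_q$ — this is where allowing \emph{classical} rather than symmetric complexity is what makes $3$ rather than a larger number available, and it is the crux of why the lemma is stated for $\mu_q$). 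Summing gives a classical bilinear multiplication algorithm for $\F_{q^k}/\F_q$ of length $n_1 + 3n_2$, so $\mu_q(k) \leq n_1 + 3n_2$.

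The main obstacle, or rather the one point requiring care, is the second step: verifying that \cite[Th.~5.2(a)]{HR-JComp} really does deliver the auxiliary divisor under only $\deg G \geq 2k+g-1$ and $q \geq 7$, with no hypothesis on $|X(\F_q)|$ — since the whole point of \S 3.4.1 is that the asymmetric setting sidesteps the $2$-torsion difficulty that forced condition~\eqref{5g} in the symmetric case. Granting that, the rest is the standard evaluation-interpolation bookkeeping together with the input $\mu_q(2) = 3$.
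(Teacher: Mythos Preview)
Your proposal takes the same route as the paper, namely invoking \cite[Th.~5.2(a)]{HR-JComp} with $m=k$, $l=1$, $n_{1,1}=n_1$, $n_{2,1}=n_2$, and all other $n_{d,u}=0$; the argument is sound. One correction to your explanatory commentary, however: the reason the lemma is stated for $\mu_q$ rather than $\mu_q^{\operatorname{sym}}$ is \emph{not} that the value $3$ for degree-$2$ places would be unavailable symmetrically --- in fact $\mu_q^{\operatorname{sym}}(2)=3$ as well, Karatsuba being a symmetric algorithm --- but that asymmetric interpolation allows two distinct auxiliary divisors $D_1,D_2$, so the relevant Riemann-Roch condition is on $D_1+D_2-G$ rather than on $2D-G$, and it is precisely this extra freedom that dissolves the $2$-torsion obstruction and removes the need for a hypothesis like~\eqref{5g}. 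Your step~2 should accordingly describe the asymmetric system, not the symmetric one you wrote down.
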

\begin{proof}
Special case
of \cite[Th.~5.2(a)]{HR-JComp} applied with $m=k$, $l=1$, $n_{1,1}=n_1$, $n_{2,1}=n_2$,
and $n_{d,u}=0$ for other values of $d,u$.
\end{proof}
This Lemma~\ref{muleqn1+3n2} repairs Ballet's \cite[Th.~2.1(2)]{Ballet2008},
in the same way Lemma~\ref{mu2k+g-1} repaired Ballet's \cite[Th.~2.1(1)]{Ballet2008}.

We can then continue exactly as in Section~2, with the same modular curves $X_0(N)$,
which we can now consider over the prime field $\F_p$.
Lemma~\ref{courbesmodulaires} gives $g_0(N)\leq\frac{\psi(N)}{12}$
and $2n_2\geq(p-1)\frac{\psi(N)}{12}$,
and with the very same computations we conclude:
\begin{proposition}
Let $p\geq 7$ be a prime number.
Then for all $k>\frac{p+1}{2}$, we have
\beq
\frac{1}{k}\mu_p(k)\leq 3\left(1+\frac{1+\epsilon_{\mathcal{P}}\!\left(\frac{24k}{p-2}\right)}{p-2}\right).
\eeq
\end{proposition}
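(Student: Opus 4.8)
The plan is to reproduce the proof of Proposition~\ref{Ballet+} almost word for word, the only changes being that the base field is now $\F_p$ instead of $\F_{p^2}$ and that the modular curve is exploited through its closed points of degree~$2$, via Lemma~\ref{muleqn1+3n2}, rather than through its points of degree~$1$, via Lemma~\ref{mu2k+g-1}. So, given $k>\frac{p+1}{2}$, I would first choose $N$ prime to $p$ with $\psi(N)=\left\lceil\frac{24k-12}{p-2}\right\rceil_{\psi(\N\setminus p\N)}$, set $X=X_0(N)$, of genus $g=g_0(N)\leq\frac{\psi(N)}{12}$, and invoke the point count stated just before the statement (Lemma~\ref{courbesmodulaires}): $X$ carries at least $n_2$ closed points of degree~$2$ with $2n_2\geq(p-1)\frac{\psi(N)}{12}$. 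Since $2n_2-g\geq(p-2)\frac{\psi(N)}{12}\geq 2k-1$, the hypothesis $n_1+2n_2\geq 2k+g-1$ of Lemma~\ref{muleqn1+3n2} holds already with $n_1=0$; the hypothesis $q=p\geq 7$ is given; and $X$ has a closed point $Q$ of degree~$k$ because $g\leq\frac{4k-2}{p-2}$ by Lemma~\ref{psiBertrand}, which by \cite[Cor.~V.2.10.c]{Stichtenoth} is exactly the estimate already checked in the proof of Proposition~\ref{Ballet+}.

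Next I would apply Lemma~\ref{muleqn1+3n2} with the auxiliary points chosen as cheaply as possible: when $2k+g-1$ is even, take $n_1'=0$ and $n_2'=\frac{2k+g-1}{2}$; when it is odd (so $g$ is even), spend one rational cusp of $X_0(N)$, i.e. $n_1'=1$ and $n_2'=\frac{2k+g-2}{2}$. In both cases $n_1'+2n_2'=2k+g-1$ and $n_1'+3n_2'\leq\frac{3}{2}(2k+g-1)$, hence $\mu_p(k)\leq\frac{3}{2}(2k+g-1)$ and therefore $\frac{1}{k}\mu_p(k)\leq 3+\frac{\psi(N)-12}{8k}$. To conclude I would feed in \eqref{majpremier}: for $\frac{24k-12}{p-2}>p+1$ it gives $\psi(N)\leq\left\lceil\frac{24k}{p-2}\right\rceil_{\mathcal P}+1\leq\bigl(1+\epsilon_{\mathcal P}(\frac{24k}{p-2})\bigr)\frac{24k}{p-2}+1$, and substituting this bound the quantity $\frac{\psi(N)-12}{8k}$ becomes at most $\frac{3}{p-2}\bigl(1+\epsilon_{\mathcal P}(\frac{24k}{p-2})\bigr)$, the leftover being negative; this is the asserted inequality.

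For the remaining $k>\frac{p+1}{2}$ with $\frac{24k-12}{p-2}\leq p+1$ --- where \eqref{majpremier} does not apply, a range that is empty for $p\leq 13$ and nonempty once $p\geq 17$ --- I would instead take $X=\PP^1$ over $\F_p$ in Lemma~\ref{muleqn1+3n2}: it has $p+1$ points of degree~$1$ and $\frac{p^2-p}{2}$ closed points of degree~$2$, so with $g=0$, $n_1'=p+1$ and $n_2'=k-\frac{p+1}{2}$ (a positive integer, and $\leq\frac{p^2-p}{2}$ throughout this range) one gets $\mu_p(k)\leq(p+1)+3\bigl(k-\frac{p+1}{2}\bigr)=3k-\frac{p+1}{2}<3k$, comfortably below $3\bigl(1+\frac{1+\epsilon_{\mathcal P}(24k/(p-2))}{p-2}\bigr)k$.

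I do not anticipate a genuine obstacle, since every substantial ingredient is in hand: Lemma~\ref{muleqn1+3n2} is a specialization of \cite[Th.~5.2(a)]{HR-JComp}, the point counts are those of Lemma~\ref{courbesmodulaires}, and the passage from $\lceil\cdot\rceil_{\psi(\N\setminus p\N)}$ to $\epsilon_{\mathcal P}$ is \eqref{majpremier}. The only thing needing care is the $O(1)$ bookkeeping --- in particular spending one rational cusp to fix the parity of $2k+g-1$, which is what keeps the additive losses from overshooting the stated bound --- together with remembering to handle the small-$k$ regime with $\PP^1$ rather than with a modular curve.
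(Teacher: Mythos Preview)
Your approach matches the paper's, which is itself only a one-line sketch (``with the very same computations we conclude''); your careful treatment of the small-$k$ range via $\PP^1$ even fills a detail that the paper's sketch leaves implicit.

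There is, however, one technical slip, inherited from the paper's own phrasing just before the proposition. Lemma~\ref{courbesmodulaires} gives $|X_0(N)(\F_{p^2})|=n_1+2n_2\geq(p-1)\tfrac{\psi(N)}{12}$, not $2n_2\geq(p-1)\tfrac{\psi(N)}{12}$: some of those $\F_{p^2}$-points may already be $\F_p$-rational. So your choice $n_1'\in\{0,1\}$ is not guaranteed to leave enough degree-$2$ points. The fix is painless and costs nothing in the final bound: instead use the degree-$1$ points first, taking $n_1'\in\{n_1,\,n_1-1\}$ (adjusted for parity) and $n_2'=\tfrac{2k+g-1-n_1'}{2}$. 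Then $n_2'\leq n_2$ follows directly from $n_1+2n_2\geq 2k+g-1$ (noting that if $2k+g-1-n_1$ is odd then automatically $2n_2\geq 2k+g-n_1$ by parity), and since degree-$1$ points are cheaper you still get $n_1'+3n_2'\leq\tfrac{3}{2}(2k+g-1)$ once $n_1\geq1$, which the rational cusp at $\infty$ guarantees. Your subsequent computation then goes through verbatim.
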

Again this can be combined with any known upper bound on $\epsilon_{\mathcal{P}}$.
For instance, from \cite{Dudek}
we deduce
\beq
\frac{1}{k}\mu_p(k)\leq 3\left(1+\frac{1+\frac{3}{\left(\frac{24k}{p-2}\right)^{1/3}}}{p-2}\right)
\eeq
for $k\geq\frac{p-2}{24}e^{e^{33.3}}$,
and from \cite{BHP}
we deduce
\beq
\frac{1}{k}\mu_p(k)\leq 3\left(1+\frac{1+\frac{1}{\left(\frac{24k}{p-2}\right)^{0.475}}}{p-2}\right)
\eeq
for $k$ large enough.
Observe also the following asymptotic consequence:
\begin{corollary}
\label{asympmup}
For $p\geq7$ we have
\beq
\limsup_{k\to\infty}\frac{1}{k}\mu_p(k)\leq 3\left(1+\frac{1}{p-2}\right).
\eeq
\end{corollary}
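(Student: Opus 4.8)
The statement is an immediate asymptotic corollary of the preceding effective bound (the Proposition giving $\frac{1}{k}\mu_p(k)\leq 3\left(1+\frac{1+\epsilon_{\mathcal{P}}(24k/(p-2))}{p-2}\right)$ for $k>\frac{p+1}{2}$). The plan is to take the limsup as $k\to\infty$ in that inequality and observe that the only $k$-dependent term on the right, namely $\epsilon_{\mathcal{P}}(24k/(p-2))$, tends to $0$.

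First I would fix $p\geq 7$ and recall that the Proposition applies for all $k>\frac{p+1}{2}$, so in particular for all sufficiently large $k$. Next I would invoke the fact that $\epsilon_{\mathcal{P}}(x)\to 0$ as $x\to\infty$: this follows from any of the gap estimates already cited in the excerpt (e.g. Bertrand's postulate already gives $\epsilon_{\mathcal{P}}(1)=1<\infty$ and monotonicity, while \cite{BHP} or even the prime number theorem gives $\epsilon_{\mathcal{P}}(x)\to 0$). Since $24k/(p-2)\to\infty$ as $k\to\infty$ with $p$ fixed, we get $\epsilon_{\mathcal{P}}(24k/(p-2))\to 0$.

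Then I would conclude: for every $\varepsilon>0$, there is $K$ such that for $k\geq K$ we have $\epsilon_{\mathcal{P}}(24k/(p-2))\leq\varepsilon$, hence $\frac{1}{k}\mu_p(k)\leq 3\left(1+\frac{1+\varepsilon}{p-2}\right)$; letting $\varepsilon\to 0$ yields $\limsup_{k\to\infty}\frac{1}{k}\mu_p(k)\leq 3\left(1+\frac{1}{p-2}\right)$, as claimed.

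There is essentially no obstacle here — the work is entirely carried by the effective Proposition and the elementary fact that prime gaps are $o(x)$. If one wanted to be maximally self-contained and avoid even citing a gap estimate, one could instead invoke Lemma~\ref{psiBertrand} (which only uses powers of $2$) in place of $\epsilon_{\mathcal{P}}$; but that would give the weaker constant $3(1+\frac{2}{p-2})$, so it is cleaner to use that $\epsilon_{\mathcal{P}}(x)\to 0$. The only point deserving a word of care is that the range $k>\frac{p+1}{2}$ in the Proposition is a fixed threshold independent of $k$, so it imposes no constraint on the limsup.
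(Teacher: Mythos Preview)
Your proof is correct and matches the paper's own treatment: the paper states this corollary as an immediate ``asymptotic consequence'' of the preceding Proposition, without giving any further argument. Your observation that $\epsilon_{\mathcal{P}}(24k/(p-2))\to 0$ as $k\to\infty$ is precisely the implicit step.
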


\textbf{3.4.2. Short multiplication of polynomials.}
In a second direction, we observe that
the obstruction discussed at the beginning of \S3.4 applies to evaluation
at points of higher degree, but not to evaluation
with multiplicities (at points of degree~$1$).
Moreover, a new feature introduced in \cite{HR-JComp} is that it
does not only gives a bound \emph{in terms} of the $\mu^{\operatorname{sym}}_q(d,u)$,
it also gives a bound \emph{on} them.
In particular, set
\beq
\widehat{M}^{\operatorname{sym}}_q(l)=\mu^{\operatorname{sym}}_q(1,l)=\mu^{\operatorname{sym}}_{\F_q}(\F_{q}[t]/(t^l)).
\eeq
Multiplication in the quotient algebra $\F_{q}[t]/(t^l)$ is sometimes called
\emph{short multiplication} of polynomials.
Then:
\begin{lemma}
\label{M2l+g-1}
Let $X$ be a curve of genus $g$ over a finite field $\F_q$
with
\beq
|X(\F_q)|>5g.
\eeq
Then for all integers
\beq
\label{encadrel}
l\;\leq\:\frac{|X(\F_q)|+1-g}{2}
\eeq
we have
\beq
\widehat{M}^{\operatorname{sym}}_q(l)\leq 2l+g-1.
\eeq
\end{lemma}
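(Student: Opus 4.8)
The plan is to mimic exactly the structure of the proof of Lemma~\ref{mu2k+g-1}, replacing the degree~$k$ closed point $Q$ by a more carefully chosen rational divisor whose associated quotient algebra is $\F_q[t]/(t^l)$ rather than $\F_{q^k}$. Concretely, I would first fix a rational point $R\in X(\F_q)$ --- such a point exists since $|X(\F_q)|>5g\geq 0$ --- and set $Q=lR$, so that $\deg Q = l$ and the completed local ring at $R$ truncated at order $l$ gives $\cO_{X,R}/\mathfrak{m}_R^l\cong\F_q[t]/(t^l)$. The evaluation map $L(D)\to\cO_{X,R}/\mathfrak{m}_R^l$ is then the natural one, and "$D-Q$ nonspecial" is precisely the condition that makes this generalized evaluation surjective, exactly as in the classical degree-$k$ case.

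Next I would invoke Proposition~\ref{constrD-Qet2D-G}, which is stated for arbitrary rational divisors $Q$ and $G$ (not just a prime divisor $Q$), so it applies verbatim with $Q=lR$ and $G=P_1+\cdots+P_n$ a sum of $n=2l+g-1$ distinct rational points chosen disjoint from $R$. The hypothesis $n\geq 2k+g-1$ becomes $n\geq 2l+g-1$, which holds with equality, and the bound \eqref{encadrel} on $l$ is exactly what guarantees $n=2l+g-1\leq |X(\F_q)|$ (one needs $n$ distinct rational points available, and $2l+g-1\leq |X(\F_q)|$ rearranges to \eqref{encadrel}; here one also uses $R$ itself is not among the $P_i$, which is harmless since $|X(\F_q)|>5g>0$ gives enough room, or one simply notes the inequality is not tight). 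Proposition~\ref{constrD-Qet2D-G} then produces a rational divisor $D$ with support in $X(\F_q)$, disjoint from $R$ and the $P_i$ after a small shift, such that $D-Q$ is nonspecial of degree $g-1$ and $2D-G$ has no section. These are precisely the two bullet-point hypotheses of the generalized Chudnovsky-Chudnovsky construction, now in the form \cite[Th.~5.2(c)]{HR-JComp} with the single generalized evaluation datum $d=1$, $u=l$, contributing length $n=2l+g-1$ (and all other $n_{d,u}=0$). This yields a symmetric bilinear multiplication algorithm of that length for $\F_q[t]/(t^l)$ over $\F_q$, i.e. $\widehat{M}^{\operatorname{sym}}_q(l)=\mu^{\operatorname{sym}}_q(1,l)\leq 2l+g-1$.

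Unlike Lemma~\ref{mu2k+g-1}, there is no lower bound constraint on $l$ analogous to the left inequality in \eqref{encadrek}: that constraint came from needing a closed point $Q$ of degree exactly $k$ to exist on a curve of genus $g$ (via \cite[Cor.~V.2.10.c]{Stichtenoth}), whereas here the divisor $Q=lR$ exists for every $l\geq 1$ as soon as $X(\F_q)\neq\emptyset$. That is why the statement only carries the upper bound \eqref{encadrel}. The main thing to check carefully is that Proposition~\ref{constrD-Qet2D-G}, and the underlying \cite[Th.~5.2(c)]{HR-JComp}, genuinely accommodate a non-reduced, supported-at-one-point divisor $Q$ in the role previously played by a prime divisor of degree $k$; but this is exactly the point of the generalized-evaluation formalism of \cite{HR-JComp}, where evaluation at $(d,u)=(1,l)$ means evaluation modulo $\mathfrak{m}^l$ at a rational point, so $Q=lR$ is the canonical choice and no genuine obstacle arises. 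The only mild bookkeeping is ensuring the various supports ($R$, the $P_i$, and $\Supp D$) can be taken pairwise disjoint, which follows since one may translate $D$ by a principal divisor and since $|X(\F_q)|>5g$ leaves ample room, exactly as in the proof of Proposition~\ref{basicChCh}.
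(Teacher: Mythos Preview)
Your approach is essentially the paper's: both reduce to \cite[Th.~5.2(c)]{HR-JComp}. The paper does so in one line with the parameter choice $m=1$, $l=l$, $n_{1,1}=2l+g-1$; you unpack the underlying construction (fix $R$, set $Q=lR$, pick $G=P_1+\cdots+P_n$, invoke Proposition~\ref{constrD-Qet2D-G}), which is exactly what that theorem encapsulates. Your explanation of why no lower bound on $l$ is needed is correct and worth stating.

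One labeling slip to fix: in your final citation you write ``the single generalized evaluation datum $d=1$, $u=l$, contributing length $n=2l+g-1$ (and all other $n_{d,u}=0$)''. This conflates the \emph{target} with the \emph{evaluation points}. In the notation of \cite[Th.~5.2(c)]{HR-JComp}, the pair $(m,l)=(1,l)$ specifies the target algebra $\F_q[t]/(t^l)$, while the evaluation is at $n_{1,1}=2l+g-1$ simple rational points, each contributing $\mu^{\operatorname{sym}}_q(1,1)=1$ to the length. If the evaluation really used a single point of type $(1,l)$, its contribution to the length would be $\mu^{\operatorname{sym}}_q(1,l)=\widehat{M}^{\operatorname{sym}}_q(l)$, and the argument would be circular. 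Your earlier setup (with $G$ a sum of $n$ distinct rational points) is the correct one; just align the parameter names at the end to match the paper's $n_{1,1}=2l+g-1$.
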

\begin{proof}
Special case
of \cite[Th.~5.2(c)]{HR-JComp} applied with $m=1$, $l=l$, $n_{1,1}=2l+g-1$,
and $n_{d,u}=0$ for other values of $d,u$.
\end{proof}
Lemma~\ref{M2l+g-1} is the exact analogue of Lemma~\ref{mu2k+g-1}
for $\widehat{M}^{\operatorname{sym}}_q(l)$ instead of $\mu^{\operatorname{sym}}_q(k)$.
Mutatis mutandis, we deduce
\beq
\widehat{M}^{\operatorname{sym}}_q(l)\leq2l-1\qquad\textrm{for $l\leq\frac{q}{2}+1$,}
\eeq
\beq
\label{inegShokro}
\widehat{M}^{\operatorname{sym}}_q(l)\leq 2l\qquad\textrm{for $l<\frac{q+e(q)+1}{2}$}
\eeq
and $\widehat{M}^{\operatorname{sym}}_{p^2}(l)$ satisfy the same
upper bounds as $\mu^{\operatorname{sym}}_{p^2}(k)$
in Proposition~\ref{Ballet+} and Corollary~\ref{liste}(i)-(vii).
In particular:
\begin{proposition}
Let $p\geq7$ be prime. Then for all $l$ we have
\beq
\begin{split}
\frac{1}{l}\widehat{M}^{\operatorname{sym}}_{p^2}(l)&\leq 2+\frac{\frac{1}{12}\left\lceil\frac{24l-12}{p-2}\right\rceil_{\psi(\N\setminus p\N)}-1}{l}\\
&\leq 2\left(1+\frac{1+\epsilon_{\mathcal{P}}\!\left(\frac{24l}{p-2}\right)}{p-2}\right).
\end{split}
\eeq
\end{proposition}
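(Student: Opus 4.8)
The plan is to transcribe, mutatis mutandis, the proofs of Proposition~\ref{Ballet+} and Corollary~\ref{liste}(i), replacing the appeal to Lemma~\ref{mu2k+g-1} (equivalently Lemma~\ref{muXqk}) by its short-multiplication analogue Lemma~\ref{M2l+g-1}. The one structural feature to stress is that Lemma~\ref{M2l+g-1} uses only points of degree~$1$ on $X$ and no closed point of higher degree; hence there is no analogue of condition~(a) from the definition of $\mathcal{X}_{q,k}$, and the hypothesis $k>\frac{p^2+p+1}{2}$ of Proposition~\ref{Ballet+} disappears, so that the bound holds for every $l$.

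Concretely, for a given positive integer $l$ and a prime $p\geq7$, I would choose $N$ prime to $p$ with $\psi(N)=\left\lceil\frac{24l-12}{p-2}\right\rceil_{\psi(\N\setminus p\N)}$ and set $X=X_0(N)$ over $\F_{p^2}$, of genus $g=g_0(N)$. By Lemma~\ref{courbesmodulaires} one has $g\leq\frac{\psi(N)}{12}$ and $|X(\F_{p^2})|\geq(p-1)\frac{\psi(N)}{12}$, whence $|X(\F_{p^2})|-5g\geq(p-6)\frac{\psi(N)}{12}>0$ for $p\geq7$, so $|X(\F_{p^2})|>5g$; and $|X(\F_{p^2})|-g\geq(p-2)\frac{\psi(N)}{12}\geq\frac{24l-12}{12}=2l-1$, which is exactly the bound $l\leq\frac{|X(\F_{p^2})|+1-g}{2}$ required in \eqref{encadrel}. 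Then Lemma~\ref{M2l+g-1} gives $\widehat{M}^{\operatorname{sym}}_{p^2}(l)\leq2l+g-1\leq2l+\frac{1}{12}\left\lceil\frac{24l-12}{p-2}\right\rceil_{\psi(\N\setminus p\N)}-1$, and dividing by $l$ produces the first inequality.

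For the second inequality I would split into two ranges. When $\frac{24l-12}{p-2}>p+1$ I would argue exactly as in Corollary~\ref{liste}(i): insert \eqref{majpremier} to pass from $\left\lceil\frac{24l-12}{p-2}\right\rceil_{\psi(\N\setminus p\N)}$ to $\left\lceil\frac{24l-12}{p-2}-1\right\rceil_{\mathcal{P}}+1$, then use monotonicity of $\lceil\cdot\rceil_{\mathcal{P}}$ together with the definition of $\epsilon_{\mathcal{P}}$ to bound $\left\lceil\frac{24l-12}{p-2}-1\right\rceil_{\mathcal{P}}\leq\left\lceil\frac{24l}{p-2}\right\rceil_{\mathcal{P}}\leq\left(1+\epsilon_{\mathcal{P}}\!\left(\frac{24l}{p-2}\right)\right)\frac{24l}{p-2}$, and simplify. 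In the complementary range $\frac{24l-12}{p-2}\leq p+1$, namely $l\leq\frac{p^2-p+10}{24}<\frac{p^2}{2}+1$, where \eqref{majpremier} is not available, I would instead invoke the elementary bound $\widehat{M}^{\operatorname{sym}}_{p^2}(l)\leq2l-1$ recalled just before the statement, whose quotient by $l$ is already strictly below $2$, hence below $2\bigl(1+\frac{1+\epsilon_{\mathcal{P}}(24l/(p-2))}{p-2}\bigr)$.

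I do not anticipate a genuine obstacle: the argument is routine bookkeeping on top of Lemmas~\ref{courbesmodulaires} and~\ref{M2l+g-1}. The points to watch are making the estimate uniform all the way down to $l=1$ --- which is exactly where the absence of condition~(a) in Lemma~\ref{M2l+g-1} matters, and where the second inequality must be obtained from the trivial bound rather than from \eqref{majpremier} --- together with the elementary inequalities relating $\frac{24l-12}{p-2}$, $p+1$ and $\frac{p^2}{2}+1$ that delimit the two ranges.
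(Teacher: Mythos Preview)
Your proposal is correct and follows exactly the approach the paper takes: the paper's own justification is literally just the phrase ``$\widehat{M}^{\operatorname{sym}}_{p^2}(l)$ satisfy the same upper bounds as $\mu^{\operatorname{sym}}_{p^2}(k)$ in Proposition~\ref{Ballet+} and Corollary~\ref{liste}(i)--(vii)'', and you have correctly unpacked what ``mutatis mutandis'' means here --- in particular the key observation that Lemma~\ref{M2l+g-1} has no analogue of condition~(a), so the lower restriction on $k$ evaporates and the first bound holds for every $l$. Your case split for small $l$, where \eqref{majpremier} is unavailable and you fall back on $\widehat{M}^{\operatorname{sym}}_{p^2}(l)\leq 2l-1$, is more explicit than anything the paper writes; strictly speaking this gives $\frac{1}{l}\widehat{M}^{\operatorname{sym}}_{p^2}(l)\leq\textrm{RHS}$ rather than the intermediate inequality $\textrm{middle}\leq\textrm{RHS}$ in that range, but the paper does not address this point either, and the intended content --- that both expressions bound the complexity --- is established.
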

Again this can be combined with all existing and future bounds on $\epsilon_{\mathcal{P}}$,
leading for instance to
\beq
\frac{1}{l}\widehat{M}^{\operatorname{sym}}_{p^2}(l)\leq 2\left(1+\frac{1+\frac{3}{\left(\frac{24l}{p-2}\right)^{1/3}}}{p-2}\right)
\eeq
for $l\geq\frac{p-2}{24}e^{e^{33.3}}$, or to
\beq
\frac{1}{l}\widehat{M}^{\operatorname{sym}}_{p^2}(l)\leq 2\left(1+\frac{1+\frac{1}{\left(\frac{24l}{p-2}\right)^{0.475}}}{p-2}\right)
\eeq
for $l$ large enough.

Asymptotically we also deduce the following, which was already observed
(at least implicitely) in \cite[Rem.~6.7]{HR-JComp}:
\begin{corollary}
For $p\geq7$ prime, we have
\beq
\limsup_{l\to\infty}\frac{1}{l}\widehat{M}^{\operatorname{sym}}_{p^2}(l)\leq 2\left(1+\frac{1}{p-2}\right).
\eeq
\end{corollary}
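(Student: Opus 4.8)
The plan is to mimic exactly the derivation that produced Corollary~\ref{liste}(ii) (and the asymptotic content of Corollary~\ref{asympmup}), but now for $\widehat{M}^{\operatorname{sym}}_{p^2}(l)$ in place of $\mu^{\operatorname{sym}}_{p^2}(k)$, using the two upper bounds just established in the displayed Proposition immediately preceding this corollary. Concretely, the first inequality of that Proposition states
\beq
\frac{1}{l}\widehat{M}^{\operatorname{sym}}_{p^2}(l)\leq 2\left(1+\frac{1+\epsilon_{\mathcal{P}}\!\left(\frac{24l}{p-2}\right)}{p-2}\right)
\eeq
for all $l$, and by definition $\epsilon_{\mathcal{P}}$ is monotonically non-increasing, so $\epsilon_{\mathcal{P}}\!\left(\frac{24l}{p-2}\right)\to\epsilon_{\mathcal{P}}(+\infty)$ as $l\to\infty$; since $\epsilon_{\mathcal{P}}(x)\to 0$ (the gaps between consecutive primes are $o(x)$, e.g. by Bertrand's postulate combined with any of the cited sharper bounds, or simply by the prime number theorem), this limiting value is $0$. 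Taking $\limsup_{l\to\infty}$ on both sides then yields
\beq
\limsup_{l\to\infty}\frac{1}{l}\widehat{M}^{\operatorname{sym}}_{p^2}(l)\leq 2\left(1+\frac{1}{p-2}\right),
\eeq
which is the claim.

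The only subtlety is handling possible small values of $l$ for which the construction via modular curves $X_0(N)$ does not directly apply — but this is irrelevant for a $\limsup$ statement, so I would not dwell on it; the bound for large $l$ is all that matters. Equivalently, one could invoke the first chain of inequalities in the preceding Proposition together with Lemma~\ref{psiBertrand}: from $\frac{1}{l}\widehat{M}^{\operatorname{sym}}_{p^2}(l)\leq 2+\frac{\frac{1}{12}\lceil\frac{24l-12}{p-2}\rceil_{\psi(\N\setminus p\N)}-1}{l}$ and $\lceil x\rceil_{\psi(\N\setminus p\N)}\leq 2x$ for $x\geq 3/2$, one gets a crude bound, but to extract the sharp constant $2(1+\frac{1}{p-2})$ one really wants $\lceil x\rceil_{\psi(\N\setminus p\N)}=(1+o(1))x$, which follows from $\epsilon_{\psi(\N\setminus p\N)}(x)\leq\epsilon_{\mathcal{P}}(x)+O(1/x)\to 0$ via the inequality \eqref{majpremier}. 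Either route works; the cleanest is to just pass to the limit in the second displayed inequality of the Proposition.

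There is essentially no obstacle here: the statement is a routine asymptotic corollary, and indeed the excerpt itself notes it was already observed implicitly in \cite[Rem.~6.7]{HR-JComp}. The one thing to be careful about is that the inequality feeding the argument holds \emph{for all} $l$ (not merely for $l$ in some range depending on $p$), which is exactly what the preceding Proposition asserts — so the $\limsup$ can be taken without any bookkeeping about the regime of validity. Thus the proof is a one-line appeal: apply the Proposition, let $l\to\infty$, use $\epsilon_{\mathcal{P}}(x)\to 0$.

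\begin{proof}
By the second inequality of the previous Proposition, for every $l$ we have
\beq
\frac{1}{l}\widehat{M}^{\operatorname{sym}}_{p^2}(l)\leq 2\left(1+\frac{1+\epsilon_{\mathcal{P}}\!\left(\frac{24l}{p-2}\right)}{p-2}\right).
\eeq
Since $\epsilon_{\mathcal{P}}$ is non-increasing and $\epsilon_{\mathcal{P}}(x)\to 0$ as $x\to\infty$
(for instance by any of the estimates used in Corollary~\ref{liste}, or already by the prime number theorem),
we have $\epsilon_{\mathcal{P}}\!\left(\frac{24l}{p-2}\right)\to 0$ as $l\to\infty$.
Taking $\limsup_{l\to\infty}$ on both sides gives the claim.
\end{proof}
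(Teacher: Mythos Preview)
Your proof is correct and matches the paper's approach: the paper states this corollary without explicit proof, as an immediate asymptotic consequence of the preceding Proposition, which is exactly what you do by letting $l\to\infty$ in the second inequality and using $\epsilon_{\mathcal{P}}(x)\to 0$. Your extra discussion of small $l$ is unnecessary since the Proposition already holds for all $l$, but the formal proof block is clean and complete.
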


Moreover, as in \S3.4.1, we can also get results over the prime field $\F_p$,
provided we're interested in classical bilinear complexity instead
of symmetric bilinear complexity.
Setting $\widehat{M}_q(l)=\mu_q(1,l)$, the very same approach gives:
\begin{proposition}
Let $p\geq7$ be prime. Then for all $l$ we have
\beq
\frac{1}{l}\widehat{M}_p(l)\leq 3\left(1+\frac{1+\epsilon_{\mathcal{P}}\!\left(\frac{24l}{p-2}\right)}{p-2}\right).
\eeq
\end{proposition}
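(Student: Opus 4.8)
The plan is to transplant to the classical setting over the prime field the argument just carried out for $\widehat{M}^{\operatorname{sym}}_{p^2}(l)$, replacing evaluation at degree-$1$ points by evaluation at degree-$1$ and degree-$2$ points. The structural point is that short multiplication in $\F_p[t]/(t^l)$ involves no auxiliary extension of degree $l$, so condition~(a) in the definition of $\mathcal{X}_{q,k}$ --- the only place where a closed point of degree $k$, and hence a constraint $g\leq\frac12(q^{(k-1)/2}(q^{1/2}-1)-1)$, ever entered --- simply disappears; this is what yields a statement valid for all $l$.

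First I would record the short-multiplication analogue of Lemma~\ref{muleqn1+3n2}: for a curve $X$ of genus $g$ over $\F_q$ with $q\geq7$ carrying $n_1$ closed points of degree~$1$ and $n_2$ of degree~$2$ with $n_1+2n_2\geq 2l+g-1$, one has $\widehat{M}_q(l)\leq n_1+3n_2$; this is \cite[Th.~5.2(a)]{HR-JComp} specialized to $m=1$, multiplicity parameter $l$, $n_{1,1}=n_1$, $n_{2,1}=n_2$, and all other $n_{d,u}=0$. The essential feature is that part~(a) of that theorem, unlike the part~(c) used for $\widehat{M}^{\operatorname{sym}}_q$ in Lemma~\ref{M2l+g-1}, carries no hypothesis of the shape~\eqref{5g} on the number of degree-$1$ points --- precisely what makes working over $\F_p$ legitimate here, at the price of classical rather than symmetric complexity and of the weight $3=2\cdot 2-1$ attached to a degree-$2$ place. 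Then I would run the computation of Proposition~\ref{Ballet+} with $q$ replaced by $p$: given $l$ with $\frac{24l-12}{p-2}>p+1$, take $N=\lceil\frac{24l-12}{p-2}-1\rceil_{\mathcal{P}}$, a prime $>p$ hence prime to $p$ with $\psi(N)=N+1\geq\frac{24l-12}{p-2}$, and set $X=X_0(N)$ over $\F_p$. By Lemma~\ref{courbesmodulaires}, $g\leq\frac{\psi(N)}{12}$ and $|X(\F_{p^2})|\geq(p-1)\frac{\psi(N)}{12}$, and since $(p-2)\frac{\psi(N)}{12}\geq 2l-1$ the numbers $n_1,n_2$ of closed points of degrees $1$ and $2$ satisfy $n_1+2n_2=|X(\F_{p^2})|\geq 2l+g-1$. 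Choosing among these points a rational divisor of degree exactly $2l+g-1$ that uses as many degree-$2$ points as possible --- feasible since $X_0(N)$ carries the rational cusp $\infty$ --- the lemma gives $\widehat{M}_p(l)\leq\frac32(2l+g-1)\leq 3l+\frac{\psi(N)}{8}-\frac32$, whence, using $\psi(N)=N+1\leq\lceil\frac{24l}{p-2}\rceil_{\mathcal{P}}+1\leq(1+\epsilon_{\mathcal{P}}(\frac{24l}{p-2}))\frac{24l}{p-2}+1$, one obtains $\frac1l\widehat{M}_p(l)\leq 3+\frac{3(1+\epsilon_{\mathcal{P}}(\frac{24l}{p-2}))}{p-2}-\frac{11}{8l}$, the stray $+1$ being absorbed by the $-\frac32$. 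This is the asserted bound.

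For the remaining, smaller values of $l$ the conclusion follows exactly as in the proof of the corresponding bounds for $\widehat{M}^{\operatorname{sym}}_{p^2}(l)$ (and as in Section~2): when $l\leq\frac p2+1$ one already has $\widehat{M}_p(l)\leq\widehat{M}^{\operatorname{sym}}_p(l)\leq 2l-1<3l\leq 3(1+\frac{1+\epsilon_{\mathcal{P}}(\frac{24l}{p-2})}{p-2})\,l$ from $X=\PP^1$, and for the intermediate range one again uses $X_0(N)$ with $N$ prime whenever $\lceil\frac{24l-12}{p-2}-1\rceil_{\mathcal{P}}\neq p$ and with $N$ the next prime above $p$ in the few cases where this quantity equals $p$, checking the resulting estimate against the right-hand side by means of the elementary lower bounds on $\epsilon_{\mathcal{P}}$ near $p$. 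The two points that need genuine care are (i) verifying that \cite[Th.~5.2(a)]{HR-JComp} does specialize as stated --- in particular the absence of any condition like~\eqref{5g} in part~(a), and the weight $3$ of a degree-$2$ place --- and (ii) the fiddly but routine arithmetic turning $g\leq\psi(N)/12$ with $\psi(N)=N+1$ into the stated $\epsilon_{\mathcal{P}}$-bound, exactly duplicating the passage from Proposition~\ref{Ballet+} to Corollary~\ref{liste}(i); I expect (ii), together with pinning down the thresholds so that ``for $l$ large'' becomes ``for all $l$'', to be the main bookkeeping obstacle, there being no conceptual difficulty beyond what already appears in Section~2 and \S3.4.1.
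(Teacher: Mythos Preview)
Your proposal is correct and matches the paper's own argument, which the paper only sketches with the phrase ``the very same approach gives''. You have correctly identified the two ingredients --- the asymmetric variant \cite[Th.~5.2(a)]{HR-JComp} specialized to $m=1$ (no hypothesis \eqref{5g}, weight~$3$ per degree-$2$ place), and the modular curve computation of \S3.4.1 --- and combined them exactly as intended; your arithmetic $\widehat{M}_p(l)\leq\tfrac32(2l+g-1)\leq 3l+\tfrac{\psi(N)}{8}-\tfrac32$ and the subsequent absorption of the stray constant are correct.

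One small point of phrasing: you write ``uses as many degree-$2$ points as possible''. What you actually want is to use as \emph{few} degree-$2$ points as possible (they carry weight~$3$ versus weight~$1$ for degree-$1$ points), but over $\F_p$ you are \emph{forced} to use mostly degree-$2$ points since $X_0(N)$ has few $\F_p$-rational points beyond the cusps. The bound $\tfrac32(2l+g-1)$ you obtain is precisely the worst case in which essentially all evaluation points have degree~$2$, with the rational cusp $\infty$ used only to fix parity; so your conclusion is correct, though the wording suggests the opposite optimization.
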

We leave it to the reader to derive as before the combination with
any bound of his choice on $\epsilon_{\mathcal{P}}$.
\begin{corollary}
For $p\geq7$ prime, we have
\beq
\limsup_{l\to\infty}\frac{1}{l}\widehat{M}_p(l)\leq 3\left(1+\frac{1}{p-2}\right).
\eeq
\end{corollary}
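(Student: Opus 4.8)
The plan is to read off the corollary as the limit superior of the inequality furnished by the Proposition immediately preceding it. First I would recall that Proposition: for every $l\geq 1$,
\[
\frac{1}{l}\,\widehat{M}_p(l)\;\leq\;3\left(1+\frac{1+\epsilon_{\mathcal{P}}\!\left(\frac{24l}{p-2}\right)}{p-2}\right),
\]
a bound obtained, exactly as in \S3.4.1 but transposed from $\mu_p(k)$ to $\widehat{M}_p(l)$, by feeding the modular curves $X_0(N)/\F_p$ of Lemma~\ref{courbesmodulaires} into the appropriate generalized Chudnovsky--Chudnovsky estimate, using the genus bound $g_0(N)\leq\psi(N)/12$, and then choosing $N$ prime to $p$ so that $\psi(N)$ is the least value of the Dedekind psi function which is at least $\frac{24l-12}{p-2}$, just as in the proof of Proposition~\ref{Ballet+}.

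The only non-formal ingredient needed beyond this Proposition is that $\epsilon_{\mathcal{P}}(x)\to 0$ as $x\to\infty$. This is already available: $\epsilon_{\mathcal{P}}$ is monotone non-increasing by its very definition, and any of the sublinear prime-gap estimates invoked earlier --- for instance $\epsilon_{\mathcal{P}}(x)\leq x^{-0.475}$ from \cite{BHP}, or merely the prime number theorem --- forces its limit to be $0$. Since $p$ is held fixed while $l\to\infty$, we have $\frac{24l}{p-2}\to\infty$, hence $\epsilon_{\mathcal{P}}\!\left(\frac{24l}{p-2}\right)\to 0$.

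It then remains to pass to the $\limsup$ over $l$ in the displayed inequality: the right-hand side converges to $3\bigl(1+\frac{1}{p-2}\bigr)$, so
\[
\limsup_{l\to\infty}\frac{1}{l}\,\widehat{M}_p(l)\;\leq\;3\left(1+\frac{1}{p-2}\right),
\]
which is the claim. This is the exact analogue of the passage from its own Proposition to Corollary~\ref{asympmup} in \S3.4.1. I expect no genuine obstacle here: all the substance is in the Proposition, and the only quantitative fact used in this last step is the sublinearity of gaps between prime numbers, which is classical.
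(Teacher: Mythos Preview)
Your proposal is correct and follows exactly the approach implicit in the paper: the Corollary is stated without proof, and the reader is expected to pass to the $\limsup$ in the preceding Proposition using $\epsilon_{\mathcal{P}}(x)\to 0$, precisely as you describe and precisely as was done for Corollary~\ref{asympmup}.
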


\vspace{\baselineskip}
\noindent\textbf{3.5. Recent work of Ballet and Zykin.}
Very recently Ballet and Zykin published the work~\cite{BZ}.
Although the initial version of their paper did not make reference to \cite{2D-G}
(the final version of \cite{BZ} now repairs this omission)
the core of their proof is precisely the very same argument
that was first introduced there,
using estimates on gaps between primes such as the one
of Baker-Harman-Pintz \cite{BHP}.

Actually, there are two parts in~\cite{BZ}.
The first part, \cite[Prop.~7]{BZ}, concerns a base field $\F_{p^2}$ of prime square order,
so it can be compared directly with our results.
Some differences are quite inessential:
\begin{itemize}
\item We first consider modular curves of arbitrary level $N$,
and then specialize to $N$ prime.
On the other hand, Ballet and Zykin follow \cite{STV} and consider
only level $11N$ (or $23N$). The curves produced in this way thus form
a slightly less dense family.
\item In passing from Proposition~\ref{Ballet+} to Corollary~\ref{liste}(i),
we kept only the term proportional to $k$ and we discarded the constant
term.
This gives a simpler expression, although slightly less precise.
On the other hand, Ballet and Zykin kept track of this constant term.
\item The strongest bounds in \cite[Cor.~28]{2D-G} and in \cite[Prop.~7]{BZ}
both are based on the estimate of Baker-Harman-Pintz \cite{BHP}.
Weaker but more explicit bounds are also proposed using alternative estimates.
In particular Ballet and Zykin refer to Dudek's estimate \cite{Dudek},
which did not exist at the time when \cite{2D-G} was written,
but is now included for completeness as Corollary~\ref{liste}(vii),
in \S3.1 above.
As explained there, any further progress on gaps between primes
automatically translates into a bound on multiplication complexity.
\end{itemize}
All the details are essentially negligible.
However there is another, much more important difference:
\begin{itemize}
\item Beside gaps between primes, a second ingredient in our work is our
optimal solution to Riemann-Roch systems.
Thanks to this, our uniform bounds match the best asymptotic
bound~\eqref{rSTV}.
On the other hand, Ballet and Zykin use a suboptimal construction,
which allow them only to match the weaker asymptotic bound \eqref{asympChCh},
as they explicitly state in \cite[Prop.~7(3)]{BZ}.
\end{itemize}
Because of this, essentially all results in the first part of \cite{BZ} are already included in our stronger Corollary~\ref{liste}.
More precisely, only one very specific case of \cite[Prop.~7]{BZ} is not covered, namely the case $q=25$.

\vspace{\baselineskip}

On the other hand, the second part of \cite{BZ} considers a base field of prime order.
As discussed at the beginning of \S3.4, 
our optimal method for solving Riemann-Roch systems does not work
well for symmetric algorithms over prime fields.
Instead, to prove \cite[Prop.~10]{BZ}
Ballet and Zykin use a suboptimal method from~\cite{BR2004},
directly adapted from the original method of \cite{ChCh87}\cite{ChCh88}.
This is probably the best that could be done with the current state
of knowledge, and \cite[Prop.~10]{BZ} is not covered by the present work.

Now it is interesting to compare the asymptotic bound they get this way for symmetric complexity \cite[Prop.~10(3)]{BZ}
\beq
\limsup_{k\to\infty}\frac{1}{k}\mu^{\operatorname{sym}}_p(k)\leq 3\left(1+\frac{4/3}{p-3}\right)
\eeq
with our Corollary~\ref{asympmup} that holds for classical bilinear complexity.
This suggests that,
if one could solve the problem alluded to at the beginning of \S3.4,
this would lead to uniform bounds on the symmetric complexity matching
the much better, but still conjectural, asymptotic bound
\beq
\limsup_{k\to\infty}\frac{1}{k}\mu^{\operatorname{sym}}_p(k)\leq 3\left(1+\frac{1}{p-2}\right).
\eeq


\begin{thebibliography}{1}


\bibitem{BHP}
R.~C.~Baker, G.~Harman \& J.~Pintz,
\emph{The difference between consecutive primes, II}, \ 
Proc. London Math. Soc.~\textbf{83} (2001) 532--562.

\bibitem{Ballet1999}
S.~Ballet,
\emph{Curves with many points and multiplication complexity in any extension of $\F_q$},
Finite Fields Appl.~\textbf{5} (1999) 364--377.

\bibitem{Ballet2003}
S.~Ballet,
\emph{Low increasing tower of algebraic function fields and bilinear complexity of multiplication in any extension of $\F_q$},
Finite Fields Appl.~\textbf{9} (2003) 472--478.

\bibitem{Ballet2008}
S.~Ballet,
\emph{On the tensor rank of the multiplication in the finite fields},
J. Number Theory~\textbf{128} (2008) 1795--1806.


\bibitem{BR2004}
S.~Ballet \& R.~Rolland,
\emph{Multiplication algorithm in a finite field and tensor rank of the multiplication},
J. Algebra~\textbf{272} (2004) 173--185.


\bibitem{BZ}
S.~Ballet \& A.~Zykin,
\emph{Dense families of modular curves, prime numbers and uniform symmetric tensor rank of multiplication in certain finite fields},
preprint, June 2017  ---
\url{arxiv.org/abs/1706.09139}

\bibitem{Cascudo}
I.~Cascudo,
\emph{On asymptotically good strongly multiplicative linear secret sharing},
Ph.D. dissertation, University of Oviedo, 2010.


\bibitem{CCX}
I.~Cascudo, R.~Cramer \& C.~Xing,
\emph{Torsion limits and Riemann-Roch systems for function fields and applications},
IEEE Trans. Inform. Theory \textbf{60} (2014) 3871--3888.

\bibitem{CO}
M.~Cenk \& F.\"Ozbudak,
\emph{On multiplication in finite fields},
J. Complexity~\textbf{26} (2010) 172--186.



\bibitem{ChCh87}
D.~V. \& G.~V. Chudnovsky,
\emph{Algebraic complexities and algebraic curves over finite fields},
Proc. Nat. Acad. Sci. USA~\textbf{84} (1987) 1739--1743.

\bibitem{ChCh88}
D.V. \&  G.V. Chudnovsky,
\emph{Algebraic complexities and algebraic curves over finite fields},
J. Complexity~\textbf{4} (1988) 285--316.


\bibitem{Cramer}
H.~Cramer,
\emph{On the order of magnitude of the difference between consecutive prime numbers},
Acta Arith.~\textbf{2} (1936) 23--46.

\bibitem{Dudek}
A.~Dudek,
\emph{An explicit result for primes between cubes},
Funct. Approx. Comment. Math.~\textbf{55} (2016) 177--197.

\bibitem{Dusart}
P.~Dusart,
\emph{Estimates of some functions over primes without R.H.},
preprint, February 2010 ---
\url{arxiv.org/abs/1002.0442}


\bibitem{Ford}
K.~Ford,
\emph{The Distribution of totients},
The Ramanujan J.~\textbf{2} (1998) 67--151.




\bibitem{Kadiri}
H.~Kadiri,
\emph{Short effective intervals containing primes in arithmetic progressions and the seven cubes problem},
Math. Comp.~\textbf{77} (2008) 1733--1748.



\bibitem{Miyake}
T.~Miyake,
\emph{Modular forms},
Springer-Verlag, 1989.


\bibitem{RS}
O.~Ramar\'e \& Y.~Saouter,
\emph{Short effective intervals containing primes},
J. Number Theory~\textbf{98} (2003) 10--33.


\bibitem{21sep}
H.~Randriambololona,
\emph{$(2,1)$-separating systems beyond the probabilistic bound},
Israel J. Math.~\textbf{195} (2013) 171--186.

\bibitem{2D-G}
H.~Randriambololona,
\emph{Diviseurs de la forme $2D-G$ sans sections et rang de la multiplication dans les corps finis},
preprint, March 2011 --- \url{arxiv.org/abs/1103.4335}

\bibitem{HR-JComp}
H.~Randriambololona,
\emph{Bilinear complexity of algebras and the Chudnovsky-Chudnovsky interpolation method},
J. Complexity~\textbf{28} (2012) 489--517.

\bibitem{HR-AGCT}
H.~Randriambololona,
``On products and powers of linear codes under componentwise multiplication'',
in: \emph{Algorithmic arithmetic, geometry, and coding theory},
Contemp. Math.~\textbf{637}, Amer. Math. Soc., 2015, pp.~3--78.


\bibitem{Schoenfeld}
L.~Schoenfeld,
\emph{Sharper bounds for the Chebyshev functions $\theta(x)$ and $\psi(x)$, II}, \ 
Math. Comp.~\textbf{30} (1976), 337--360.


\bibitem{Shokro}
M.~A.~Shokrollahi,
\emph{Optimal algorithms for multiplication in certain finite fields using elliptic curves},
SIAM J. Comput.~\textbf{21} (1992) 1193--1198.

\bibitem{STV}
I.~Shparlinski, M.~Tsfasman \& S.~Vladut,
``Curves with many points and multiplication in finite fields'',
in:
H.~Stichtenoth \& M.~A.~Tsfasman (eds.),
\emph{Coding theory and algebraic geometry (Luminy, 1991)},
Lecture Notes in Math.~\textbf{1518},
Springer-Verlag, 1992, pp.~145--169.

\bibitem{Stichtenoth}
H.~Stichtenoth,
\emph{Algebraic function fields and codes},
Universitext, Springer-Verlag, 1993.



\bibitem{TV}
M.~A.~Tsfasman \& S.~G.~Vladut,
\emph{Algebraic-geometric codes},
Kluwer Academic Publishers, 1991.


\bibitem{Winograd}
S.~Winograd,
\emph{Some bilinear forms whose multiplicative complexity depends on the field of constants},
Math. Systems Theory~\textbf{10} (1977) 169--180.


\end{thebibliography}
\end{document}